\documentclass[11pt]{amsart}
\usepackage{CJK}
\usepackage{indentfirst}
\usepackage{amscd}
\usepackage{amsmath, amssymb}
\usepackage{amsfonts}
\usepackage{enumerate}
\usepackage{cite}

\newcommand{\ve}{\varepsilon}
\newcommand{\ddbar}{\sqrt{-1} \partial \overline{\partial}}
\newcommand{\MA}{Monge-Amp\`{e}re}
\newcommand{\DP}{Dirichlet problem}
\begin{document}
\begin{CJK}{GBK}{song}
\newcounter{theor}
\setcounter{theor}{1}
\newtheorem{claim}{Claim}
\newtheorem{theorem}{Theorem}[section]
\newtheorem{lemma}[theorem]{Lemma}
\newtheorem{corollary}[theorem]{Corollary}
\newtheorem{proposition}[theorem]{Proposition}
\newtheorem{prop}[theorem]{Proposition}
\newtheorem{question}{question}[section]
\newtheorem{definition}[theorem]{Definition}
\newtheorem{remark}[theorem]{Remark}

\numberwithin{equation}{section}

\title[Subsolution theorem for the Monge-Amp\`{e}re equation]{Subsolution theorem for the Monge-Amp\`{e}re equation over almost Hermitian manifold}
\author[Jiaogen Zhang]{Jiaogen Zhang}
\address{Jiaogen Zhang, School of Mathematical Sciences, University of Science and Technology of China, Hefei 230026, People's Republic of China }
\email{zjgmath@mail.ustc.edu.cn}
\subjclass[2010]{32W20, 32Q60, 35B50, 31C10.}
\keywords{Complex Monge-Amp\`{e}re equation, Almost Hermitian manifold, A priori estimates, Subsolution,  $J$-plurisubharmonic}

\begin{abstract}
Let $\Omega\subseteq M$ be a bounded domain with a smooth boundary $\partial\Omega$, where $(M,J,g)$ is a compact, almost Hermitian manifold. The main result of this paper is to consider the Dirichlet problem  for a complex Monge-Amp\`{e}re equation on $\Omega$. Under the existence of a $C^{2}$-smooth strictly $J$-plurisubharmonic ($J$-psh for short) subsolution, we can solve this Dirichlet problem. Our method is based on the properties of subsolutions which have been widely used for fully nonlinear elliptic equations over Hermitian manifolds.
\end{abstract}
\maketitle

\linespread{1.2}

\section{Introduction}
Let $(M,J,g)$ be a compact almost Hermitian manifold of real dimension $2n$, and let $\Omega\subseteq M$ be a smooth domain with a smooth boundary $\partial \Omega$. In what follows, we denote by $\omega$  the K\"ahler form of $g$, i.e., 
$$\omega(X,Y)=g(JX,Y),$$ 
for all smooth vector fields $X,Y$ on $M$. We  shall consider the  subsolution theorem for the Monge-Amp\`{e}re equation
\begin{equation}\label{dp}
	\left\{\begin{array}{ll}
		(\ddbar u)^{n}=e^{h}  \omega^{n}\quad&\textrm{in}~{\Omega}; \\[1mm]
		u=\varphi &\textrm{on}~ \partial{\Omega}.\\[1mm]
	\end{array}\right.
\end{equation}

Our main result is 
\begin{theorem}
	Let  $\varphi, h\in C^{\infty}(\bar{\Omega})$ with $\inf_{\bar{\Omega}}h>-\infty$. Suppose that there exists a strictly $J$-psh subsolution $\underline{u}\in C^{2}(\bar{\Omega})$ for Eq. $($\ref{dp}$)$, that is,
	\begin{equation}\label{sub}
		\left\{\begin{array}{ll}
			(\ddbar \underline{u})^{n}\geq e^{h}  \omega^{n}\quad &\textrm{in}~{\Omega}; \\[1mm]
			\underline{u}=\varphi &\textrm{on}~ \partial{\Omega}.\\[1mm]
		\end{array}\right.
	\end{equation}
	Then there exists a unique smooth strictly $J$-psh solution $u$ for Eq. $($\ref{dp}$)$. 
\end{theorem}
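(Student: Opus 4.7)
The plan is to solve the Dirichlet problem by the continuity method. Introduce the family
\[
(\ddbar u_t)^n = e^{th + (1-t)\tilde{h}}\omega^n \quad \text{in } \Omega, \qquad u_t = \varphi \text{ on } \partial\Omega,
\]
where $\tilde{h}$ is chosen so that $u_0 = \underline{u}$ solves the $t=0$ equation (for instance, by replacing $\underline{u}$ with a modification, or by introducing an auxiliary parameter so that the starting equation is solvable and the subsolution property is preserved). Let $T\subseteq[0,1]$ be the set of $t$ for which a smooth strictly $J$-psh solution $u_t$ exists. Openness of $T$ will follow from the implicit function theorem applied to the linearization of the Monge-Amp\`{e}re operator, which is elliptic precisely because $u_t$ is strictly $J$-psh. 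The crux is to prove closedness, and this reduces to establishing uniform $C^{2,\alpha}(\bar\Omega)$ a priori estimates independent of $t$.

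I would organize the a priori estimates in the standard four stages. First, the $C^0$ estimate: the comparison principle gives $u \geq \underline{u}$ on $\bar\Omega$, while an upper bound comes from the maximum principle using any $J$-psh majorant (e.g.\ $\sup \varphi$ plus a small multiple of a background potential). Second, the gradient estimate: interior gradient bounds follow from a Blocki/Hanani-type maximum principle argument applied to $|\nabla u|^2 e^{\psi(u)}$; boundary gradient bounds come from the fact that $\underline{u} \leq u \leq \bar{u}$ with $\underline{u}=\bar{u}=\varphi$ on $\partial\Omega$, where $\bar{u}$ is a suitable upper barrier (e.g.\ harmonic extension). Third, the $C^2$ estimate: one first bounds $|\nabla^2 u|$ by $\sup_{\partial\Omega}|\nabla^2 u|$ plus lower order terms via a global maximum principle on $\lambda_{\max}(\ddbar u)e^{\psi(|\nabla u|^2)+\phi(u)}$; then the boundary $C^2$ bound is split into double-tangential (immediate from $u=\varphi$ on $\partial\Omega$), mixed tangential-normal (obtained by differentiating the equation and using a barrier built out of $\underline{u}-u$ together with a defining function of $\partial\Omega$), and the doubly-normal direction (handled by the Trudinger-type trick using the non-degeneracy of the subsolution to bound the full Hessian). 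Fourth, once the equation becomes uniformly elliptic with $C^2$ bounds, Evans-Krylov and Schauder yield $C^{2,\alpha}$ and then $C^\infty$ regularity. Uniqueness at the end follows from the comparison principle.

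The principal obstacle is the $C^2$ estimate in the almost Hermitian setting, specifically the boundary estimate in the doubly-normal direction and the commutator terms arising in the global Hessian bound. Because $J$ is not integrable, the identity $d = \partial + \overline\partial$ fails, so $\partial\overline\partial$ carries nontrivial torsion; every interchange of covariant derivatives in the linearized operator $L = g^{i\bar j}_u \nabla_i\nabla_{\bar j}$ produces new torsion and Nijenhuis-tensor terms. These must be absorbed into the good negative terms coming from strict $J$-plurisubharmonicity of $\underline{u}$. The role of the strict inequality $(\ddbar\underline{u})^n \geq e^h\omega^n$ with $\ddbar\underline{u} > 0$ is crucial here: it provides a uniform lower bound for $\ddbar(\underline{u} - u)$ along the maximal eigendirection, which is what makes the subsolution barrier method work despite the torsion. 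I would follow the scheme of Guan and Collins-Picard for Hermitian manifolds, modifying the test functions to accommodate the $J$-complex Hessian and carefully tracking the torsion terms throughout.
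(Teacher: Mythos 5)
Your overall architecture is right — continuity method, with a priori estimates in four stages, leaning on the strict subsolution via a Guan-type lemma — but there is one concrete choice that would derail the interior second-order estimate in the non-integrable setting. You propose to run the maximum principle on $\lambda_{\max}(\ddbar u)\,e^{\psi(|\nabla u|^2)+\phi(u)}$, i.e.\ to estimate the top eigenvalue of the \emph{complex} Hessian, as in the Hermitian (Guan, Collins--Picard) literature. The paper instead follows Chu--Tosatti--Weinkove and estimates $\lambda_1(\hat\nabla^2 u)$, the largest eigenvalue of the \emph{real} Hessian (perturbed to $\lambda_1(\Phi)$ for smoothness), and this is not a cosmetic difference. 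When $J$ is non-integrable, differentiating the equation twice produces third-order terms in which the commutators $[e_i,e_j]$, $[e_i,\bar e_j]^{1,0}$, $[V_1,e_i]$ contribute quantities like $V_\beta V_1 e_i(u)$ — genuinely real third derivatives that cannot be rewritten purely in terms of $e_k\bar e_l u$ and lower order data. Controlling them (see the lemma around (\ref{4.10}) in Section 4 and the identity $\sum_\beta\lambda_\beta=\Delta u=\Delta^{\mathbb{C}}u+T(du)$) requires having $\lambda_1(\hat\nabla^2 u)$ itself under the logarithm so the bad terms can be compared to $e_i(\lambda_1)$ and to the good quadratic terms $\sum_{\alpha>1}u^{i\bar i}|e_i(u_{V_\alpha V_1})|^2/\lambda_1(\lambda_1-\lambda_\alpha)$. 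If you work with $\lambda_{\max}(\ddbar u)$, these mixed real-direction third-order pieces have no counterpart to cancel against, and the torsion contributions do not all get absorbed into the strict subsolution term. So "carefully tracking the torsion terms" in your framework is not enough; you need to change the quantity, not just the bookkeeping. The rest of your scheme — $C^0$ by comparison with $\underline{u}$ and a barrier; gradient via a $|\partial u|^2 e^{\vartheta}$ maximum principle driven by Guan's inequality $L(\underline u-u)\geq\theta(\mathcal{U}+1)$; boundary $C^2$ split into tangential, mixed (with the $\pm X(u-\underline u)+\sum|X_j(u-\underline u)|^2+Av-B\rho^2$ barrier and the auxiliary $v=\underline u-u-td+Nd^2$), and double-normal via Trudinger's trick — matches the paper, and Evans--Krylov plus Schauder then close the argument as you say.
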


The study of  the complex Monge-Amp\`{e}re equation (\ref{dp}) (on  $\mathbb{C}^{n}$) is closely related to certain problems in geometry and complex analysis; see, for instance, \cite{Chen00,Gu98,Guan02} and reference therein. The equation has been  studied extensively over the past several decades; see \cite{BT76,BT78,Blo09b,CKNS85,FSX,Gu98,Gu10,Guan02,He12,JY13,LLZ20,LLZ21,Li04,Sch84,TWWY15,Wan12} etc. Inspired by Guan's work \cite{Gu98}, it is natural to assume the existence of subsolutions in order to solve Eq. (\ref{dp}).

The purpose of this paper is to study the Dirichlet problem for the  complex Monge-Amp\`{e}re equation on a general manifold, where the almost complex structure  might not be integrable; that is, a manifold, locally, does not look like $\mathbb{C}^{n}$.  Let us remind ourselves that when the domain $\Omega\subseteq M$ admits a strictly $J$-psh defining function,  the Eq. (\ref{dp}) was already solved by Pli\'s \cite{P14}.
His resolution could be understood as a generalized version of  \cite{CKNS85}, but the underlying structure is only almost complex. Many interesting results were also obtained by Harvey-Lawson \cite{HL}. 

The  Dirichlet problems regarding other related geometric PDEs  also attracts the attension of  many mathematicians. For instance, Wang-Zhang \cite{WZ} studied the Dirichlet problem for the Hermitian-Einstein equation over an almost Hermitian manifold. In addition, the twisted quiver bundle on an almost complex manifold was researched by Zhang \cite{ZX}. Very recently, Li-Zheng \cite{LZ20} investigated the Dirichlet problem for a class of fully nonlinear elliptic equations, and obtained the boundary second order estimates. 

The structure of this paper is as follows: in Sect.~2 we  collect some basic concepts regarding almost Hermitian manifolds. In Sects.~3-5 we  give the global estimates up to the second order. Once we have these estimates in hand, higher order estimates can be also obtained by the classical Evans-Krylov theory (see, for instance, \cite{TWWY15} ) and the Schauder theory. Then we can use the standard continuity method to obtain the existence; the proof of this can be found in \cite{Gu98}, so we shall omit the standard step here. In  Sect.~6, we obtain a  strictly $J$-psh subsolution for (\ref{dp}) under the existence of a  strictly $J$-psh defining function.
\section{Preliminaries}
Let $(M,J, g)$ be a compact manifold of real dimension $2n$ with the Riemannian metric $g$ satisfying that
\[g(Ju,Jv)=g(u,v), \ \forall u,v\in TM,\]
where $J$ is the almost complex structure. Then the complexified tangent bundle can be divided as \[TM\otimes_{\mathbb{R}}\mathbb{C}=T_{0,1}M\oplus T_{1,0}M,\]
where $T_{0,1}M$ and  $T_{1,0}M $ are the  $\sqrt{-1}$ and $-\sqrt{-1}$-eigenspaces of $J$.  Similarly, the induced almost complex structure $J^{*}$ on the cotangent bundle $T^{*}M$ is defined by $J^{*}\alpha:=-\alpha\circ J$. Then we have a natural decomposition  
\[T^{*}M\otimes_{\mathbb{R}}\mathbb{C}=T^{0,1}M\oplus T^{1,0}M.\]
For brevity,  we will also denote $J^{*}$ by $J$, if no confusion occurs. For the decomposition of the $k$-th product of a complexified contangent bundle,
\[\Lambda^{k}T^{*}M\otimes_{\mathbb{R}} \mathbb{C}=\underset{p+q=k}{\bigoplus} \Lambda^{p,q}M.\]
Let $\mathcal{A}^{p,q}$ be the set of smooth sections on $\Lambda^{p,q}M$ and denote that \[\mathcal{A}^{k}:= \underset{p+q=k}{\bigoplus} \mathcal{A}^{p,q}.\] 
We consider the exterior derivative $d: \mathcal{A}^{k}\rightarrow \mathcal{A}^{k+1}$ satisfying $d^{2}=0$.
Let $\Pi_{p+1,q},\Pi_{p,q+1},\Pi_{p+2,q-1}$ and $ \Pi_{p-1,q+2}$ be the projection of $\mathcal{A}^{k+1}$ to $\mathcal{A}^{p+1,q},$ $\mathcal{A}^{p,q+1},$ $\mathcal{A}^{p+2,q-1}$ and $\mathcal{A}^{p-1,q+2}$ respectively. Thus,
\[d=\partial+\bar{\partial}+T+\overline{T},\]
where 
$$\partial=\Pi_{p+1,q}\circ d,\ \bar{\partial}=\Pi_{p,q+1}\circ d, \ T=\Pi_{p+2,q-1}\circ d, \ \overline{T}=\Pi_{p-1,q+2}\circ d.$$
In particular, if $v\in C^{2}(M,\mathbb{R})$, then $\bar{\partial}v\in \mathcal{A}^{0,1}$ and
\[d\bar{\partial}v=\partial\bar{\partial}v+\bar{\partial}^{2}v+T\bar{\partial}v.\]
Taking the complex conjugates and adding together, 
\[T\bar{\partial}v=-\partial^{2}v, \qquad \partial\bar{\partial}v=-\bar{\partial}\partial v,\]
which implies that $\ddbar v$ is a real (1,1) form on $M$. Based on the notation in \cite{Pa05,P14}, letting $e_{1},\cdots, e_{n}$ be a local $g$-orthonormal frame of $T_{1,0}M$,  we define
\[v_{i\bar{j}}:= e_{i}\bar{e}_{j}v-[e_{i},\bar{e}_{j}]^{(0,1)}v.\]
Then, in this local chart,
\begin{equation}
	\ddbar v=\sqrt{-1}\sum_{i,j=1}^{n} v_{i\bar{j}}\theta_{i}\wedge\bar{\theta}_{j},
\end{equation}
where $\theta_{1},\cdots,\theta_{n}$ is a local $g$-orthonormal frame of $T^{1,0}M$ dual to $e_{1},\cdots,e_{n}$. Thus we can rewrite the equation in (\ref{dp}) as 
\begin{equation}\label{2.1}
	\log\det(u_{i\bar{j}})=h.
\end{equation}
Let us define its linearized operator by
\[L:=u^{i\bar{j}}(e_{i}\bar{e}_{j}-[e_{i},\bar{e}_{j}]^{(0,1)}),\]
where $(u^{i\bar{j}})=(u_{i\bar{j}})^{-1}$ is the inverse matrix. Notice that $L$ is uniformly elliptic if $u\in C^{2}$ is strictly $J$-psh.
\begin{definition}
	For any $v\in C^{2}(M,\mathbb{R})$ with $\Omega\subseteq M$ being an open set,
	\begin{enumerate}
		\item[(1)] we say that $v$ is $J$-psh on $\Omega$ if the matrix $(u_{i\bar{j}})$ is nonnegative at each point of $\Omega$;
		\item[(2)] we say that $v$ is strictly $J$-psh on $\Omega$ if, for each $\varphi\in C^{2}(\Omega)$, there exists $\varepsilon_{0}>0$ such that $u+\varepsilon \varphi$ is $J$-psh on $\Omega$ for all $0<\varepsilon<\varepsilon_{0}$.
	\end{enumerate}
	We denote the set of $J$-psh functions on $\Omega$ by PSH$(\Omega)$. 
\end{definition}

Let us recall the notion of canonical connections on almost Hermitian manifolds. 

Supposing $(M,J,g)$ is an almost Hermitian manifold, there exists a canonical connection $\nabla $ on $M$ which plays a very similar role to that of the Chern connection on the Hermitian manifold. Usually, we say that a connection on $(M,J,g)$ is an almost-Hermitian connection if $\nabla g=\nabla J=0$.   Noticing that such connection always exists \cite{KN63}, we have  the following theorem (see \cite{Gau97,TWY08}):
\begin{theorem}
	There exists a unique almost-Hermitian connection $\nabla$ on an almost Hermitian manifold $(M,J,g)$ whose $(1,1)$ part of the torsion vanishes. 
\end{theorem}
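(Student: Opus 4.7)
I would adapt the classical construction of the Chern connection on a Hermitian manifold to the almost Hermitian setting, following the strategy in \cite{Gau97,TWY08}.

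For uniqueness, suppose $\nabla^{(1)}$ and $\nabla^{(2)}$ are two almost-Hermitian connections whose $(1,1)$-torsions vanish. Their difference $A:=\nabla^{(1)}-\nabla^{(2)}$ is an $\mathrm{End}(TM)$-valued $1$-form. From $\nabla^{(i)}g=0$, each endomorphism $A_X$ is $g$-skew-symmetric; from $\nabla^{(i)}J=0$, it commutes with $J$; after complexification, $A_X$ therefore preserves the splitting $TM\otimes\mathbb{C}=T^{1,0}M\oplus T^{0,1}M$. The two torsions satisfy $T^{(1)}(X,Y)-T^{(2)}(X,Y)=A_X Y-A_Y X$, so vanishing of both $(1,1)$-parts forces $A_Z\bar W = A_{\bar W} Z$ for any $(1,0)$-vector $Z$ and $(0,1)$-vector $\bar W$. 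Since the left side lies in $T^{0,1}M$ and the right side in $T^{1,0}M$, both vanish. Using the $g$-skew-symmetry of $A_Z$ together with the non-degenerate pairing of $T^{1,0}M$ and $T^{0,1}M$ under $g$, this further forces $A_Z$ to annihilate $T^{1,0}M$ as well, and similarly for $A_{\bar W}$; hence $A\equiv 0$.

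For existence, I would work in a local $g$-orthonormal frame $e_1,\dots,e_n$ of $T^{1,0}M$ with dual coframe $\theta^1,\dots,\theta^n$. Any almost-Hermitian connection is encoded by connection $1$-forms $\theta^j_i$ satisfying the skew-Hermiticity relation $\theta^j_i+\overline{\theta^i_j}=0$, and its torsion appears in the structure equation
\begin{equation*}
  d\theta^i = -\theta^i_j\wedge\theta^j + \Theta^i.
\end{equation*}
Decomposing by type and observing that $\theta^i_j\wedge\theta^j$ has no $(0,2)$-component, the $(0,2)$-piece $\Theta^i_{(0,2)}=(d\theta^i)^{(0,2)}$ is forced (and corresponds essentially to the Nijenhuis tensor), while
\begin{equation*}
  \Theta^i_{(1,1)} = (d\theta^i)^{(1,1)} + (\theta^i_j)^{(0,1)}\wedge\theta^j.
\end{equation*}
Imposing $\Theta^i_{(1,1)}=0$ determines the $(0,1)$-part of each $\theta^i_j$ uniquely, and the $(1,0)$-part is then fixed by skew-Hermiticity. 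Patching the local pieces on overlaps --- where uniqueness ensures compatibility --- yields the desired global connection.

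The main obstacle is the careful bookkeeping of the torsion decomposition: one must verify that the rigidity of $\Theta^i_{(0,2)}$, forced by the (possibly non-integrable) almost complex structure, does not obstruct the freedom needed to set $\Theta^i_{(1,1)}=0$. Since the latter equation only involves the $(0,1)$-components of the connection forms while $\Theta^i_{(0,2)}$ is purely of type $(0,2)$, these conditions decouple, and both existence and uniqueness reduce to linear algebra on each tangent space, as sketched above.
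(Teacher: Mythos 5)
The paper does not actually prove this statement; it cites Ehresmann--Libermann \cite{EL51} and refers the reader to \cite{Gau97,TWY08} and \cite{KN63}, so there is no in-text argument to compare against. Your proof is correct and is essentially the standard moving-frames construction of the Chern (second canonical) connection underlying those references: the uniqueness argument via the difference tensor $A$ --- using that $\nabla g=0$ makes each $A_X$ $g$-skew, that $\nabla J=0$ makes it type-preserving, and that the vanishing of the $(1,1)$-torsion forces $A_Z\bar W=A_{\bar W}Z$ with the two sides living in complementary eigenspaces --- is exactly the right linear-algebra reduction, and the existence argument via the type decomposition of the first structure equation in a $g$-unitary coframe (the $(0,2)$-torsion forced by $(d\theta^i)^{(0,2)}$, the $(1,1)$-condition determining $(\theta^i_j)^{(0,1)}$, and skew-Hermiticity determining $(\theta^i_j)^{(1,0)}$, with global patching secured by the local uniqueness) is the standard one. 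The only thing I would tighten is the phrase that the $g$-skew-symmetry ``further forces $A_Z$ to annihilate $T^{1,0}M$'': it is worth spelling out, as you implicitly do, that for $W_1,W_2\in T^{1,0}M$ and $\bar W\in T^{0,1}M$ one has $g(A_ZW_1,\bar W)=-g(W_1,A_Z\bar W)=0$, and since $g$ pairs $T^{1,0}M$ nondegenerately with $T^{0,1}M$ while $A_ZW_1\in T^{1,0}M$, this gives $A_ZW_1=0$.
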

This connection was found by Ehresmann-Libermann \cite{EL51}. Sometimes it is also referred to the Chern connection, because no confusion occurs when $J$ is integrable.
Under a local frame like the previous one, we have that
\begin{equation}
	\ddbar v=\sqrt{-1}\sum_{i,j=1}^{n} (\nabla_{\bar{j}}\nabla_{i}v)\theta_{i}\wedge\bar{\theta}_{j}.
\end{equation}
\subsection{Properties of subsolution}
The following lemma is due to  Guan \cite{Gu14}, who proved it for more general fully nonlinear PDEs: 
\begin{lemma}\label{Lma2.1}
	Let $\underline{u}\in C^{2}(\bar{\Omega})$ be a strictly $J$-psh subsolution to the Eq. $($\ref{dp}$)$. There exist constants $N, \theta>0$ such that if $\sum_{i=1}^{n}u_{i\bar{i}}\geq N$ at a point $p\in\Omega$ where $g_{i\bar{j}}=\delta_{ij}$ and the matrix $\{u_{i\bar{j}}\}$ is diagonal, then 
	\begin{equation}\label{2.2}
		L(\underline{u}-u)\geq \theta(\sum_{i=1}^{n} u^{i\bar{i}}+1)\qquad \textrm{in}~ \Omega.
	\end{equation}
\end{lemma}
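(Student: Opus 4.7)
My plan is to leverage two facts: the strict $J$-plurisubharmonicity of $\underline u$ on $\bar\Omega$, and the pointwise constraint $\det(u_{i\bar j})=e^{h}$ with $h$ bounded. First, since $\underline u\in C^{2}(\bar\Omega)$ is strictly $J$-psh on a compact set, I will find a uniform constant $\delta_{0}>0$ such that $\underline u_{i\bar j}\ge 2\delta_{0}\,g_{i\bar j}$ everywhere on $\bar\Omega$; this is the only place strictness of the subsolution is used.

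Next, at the point $p$ in the diagonalizing frame, set $\lambda_{i}:=u_{i\bar i}$, so that $u^{i\bar j}=\lambda_{i}^{-1}\delta_{ij}$. Because $\underline u_{i\bar i}$ is a diagonal entry of a Hermitian matrix written in a $g$-orthonormal basis, it is bounded below by the smallest eigenvalue of that matrix; hence $\underline u_{i\bar i}\ge 2\delta_{0}$. Applying the identity $Lv=u^{i\bar j}v_{i\bar j}$ to $v=\underline u-u$ gives
\begin{equation*}
L(\underline u-u)(p)=\sum_{i}\lambda_{i}^{-1}\underline u_{i\bar i}-n\;\ge\;2\delta_{0}\sum_{i}\lambda_{i}^{-1}-n.
\end{equation*}

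The remaining task will be to show that a large trace $\sum_{i}\lambda_{i}\ge N$ already forces $\sum_{i}\lambda_{i}^{-1}$ to be large. This is where the equation enters: from $\prod_{i}\lambda_{i}=e^{h}\le e^{\sup h}$ and $\lambda_{\max}\ge N/n$, the product of the remaining $n-1$ eigenvalues is at most $n e^{\sup h}/N$, so by AM--GM the smallest eigenvalue satisfies $\lambda_{\min}\le \bigl(n e^{\sup h}/N\bigr)^{1/(n-1)}$. Hence $\sum_{i}\lambda_{i}^{-1}\ge\lambda_{\min}^{-1}\to\infty$ as $N\to\infty$. I then set $\theta:=\delta_{0}$ and choose $N$ so large that $\delta_{0}\sum_{i}\lambda_{i}^{-1}\ge n+\delta_{0}$; the preceding display rearranges to $L(\underline u-u)(p)\ge\theta\bigl(\sum_{i}u^{i\bar i}+1\bigr)$, as required.

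The only real obstacle is conceptual: one must recognize that the single one-sided inequality $u^{i\bar j}\underline u_{i\bar j}\ge 2\delta_{0}\sum_{i}u^{i\bar i}$ is already enough for the Monge--Amp\`ere operator, precisely because the determinant constraint converts any large-trace hypothesis into a large-$\sum u^{i\bar i}$ conclusion via AM--GM. No extra trouble will arise from the almost complex setting, since the torsion contributions have already been absorbed into the definition of $v_{i\bar j}$ and therefore of $L$.
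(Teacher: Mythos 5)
Your argument is correct, and it provides a self-contained, elementary proof of a statement which the paper itself does not prove but simply cites from Guan's work on general fully nonlinear equations \cite{Gu14}. The steps all check out: since $\ddbar\underline{u}\geq\tau\omega$ (the paper's $\tau$ is your $2\delta_0$), the diagonal entries in a $g$-orthonormal frame satisfy $\underline{u}_{i\bar i}\geq 2\delta_0$, giving $L(\underline{u}-u)=u^{i\bar j}\underline{u}_{i\bar j}-n\geq 2\delta_0\sum_i u^{i\bar i}-n$ at $p$; the determinant constraint $\prod_i\lambda_i=e^h\leq e^{\sup h}$ combined with $\lambda_{\max}\geq N/n$ forces $\lambda_{\min}\leq(ne^{\sup h}/N)^{1/(n-1)}$, so $\sum_i u^{i\bar i}\geq\lambda_{\min}^{-1}$ can be made as large as desired by enlarging $N$; and setting $\theta=\delta_0$ with $N$ large enough that $\delta_0\sum_i u^{i\bar i}\geq n+\delta_0$ closes the estimate.

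Where this differs from the cited source: Guan's lemma is proved in the abstract setting of concave elliptic operators $f(\lambda)$ on a cone $\Gamma$, using structure conditions on $f$ (concavity, monotonicity, behaviour of level sets near $\partial\Gamma$) rather than the algebra of any specific operator. Your proof exploits the special form $f=\log\det$ directly — the equality $u^{i\bar j}u_{i\bar j}=n$ and the eigenvalue-product pigeonhole argument do the work that concavity does in general. This buys transparency and economy for the Monge--Amp\`ere case, while Guan's route buys generality: the same statement covers Hessian and Hessian-quotient equations uniformly. For a paper concerned only with Monge--Amp\`ere, your direct argument is a perfectly legitimate substitute. Two small points worth noting explicitly: the case $n=1$ is vacuous (the hypothesis $\sum u_{i\bar i}\geq N$ is incompatible with $u_{1\bar1}=e^h$ once $N>e^{\sup h}$), and the inequality in the lemma's conclusion is pointwise at $p$ (the paper's ``in~$\Omega$'' is a slip of phrasing) — your proof correctly treats it as such.
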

Let us remark that since $\underline{u}$ is  strictly $J$-psh, there exists a uniform constant $\tau\in(0,1)$ such that 
\begin{equation}\label{tau}
	\ddbar \underline{u}\geq \tau \omega.
\end{equation}
\subsection{Maximum principle}
We have the following useful lemma.
\begin{lemma}\cite[p. 215]{CKNS85}\label{Lma2.2} Let $\Omega\subseteq M$ be a smooth bounded domain. If $u,v\in C^{2}(\bar{\Omega})\cap PSH(\Omega)$ with $u$ strictly J-psh and
	$\det(u_{i\bar{j}})\geq \det(v_{i\bar{j}})$, then $u-v$ attains its maximum on $\partial\Omega$.
\end{lemma}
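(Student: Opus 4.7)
\medskip

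\noindent\textbf{Proof proposal.} My plan is to argue by contradiction, reducing the nonlinear comparison $\det(u_{i\bar{j}})\ge\det(v_{i\bar{j}})$ to a linear, uniformly elliptic differential inequality for $u-v$ on a neighborhood of the interior maximum set, and then to invoke the strong maximum principle for linear second-order elliptic operators.

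First I would assume for contradiction that $M:=\max_{\bar{\Omega}}(u-v)>\max_{\partial\Omega}(u-v)$ and examine the closed set $E:=\{x\in\bar{\Omega}:(u-v)(x)=M\}$, which is then nonempty and contained in $\Omega$. At every $p\in E$ the second-order necessary conditions at an interior maximum give $((u-v)_{i\bar{j}})(p)\le 0$ as a Hermitian matrix (using that at a critical point the Lie-bracket correction in the definition of $v_{i\bar{j}}$ drops out, so the matrix coincides with the $(1,1)$-part of the real Hessian); combined with the strict $J$-pshness of $u$, namely $(u_{i\bar{j}})(p)>0$, this forces $(v_{i\bar{j}})(p)\ge(u_{i\bar{j}})(p)>0$. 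By continuity, $v$ is itself strictly $J$-psh on an open neighborhood $U\subset\Omega$ of $E$.

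On $U$, both $(u_{i\bar{j}})$ and $(v_{i\bar{j}})$ are positive definite, so I may invoke the concavity of $F(X)=\log\det X$ on the positive definite Hermitian cone through the tangent-hyperplane inequality $F(X)\le F(Y)+\operatorname{tr}(Y^{-1}(X-Y))$ with $X=(u_{i\bar{j}})$ and $Y=(v_{i\bar{j}})$. Rearranging and using the hypothesis $\det(u_{i\bar{j}})\ge\det(v_{i\bar{j}})$ gives
\[
v^{i\bar{j}}(u-v)_{i\bar{j}}\;\ge\;\log\det(u_{i\bar{j}})-\log\det(v_{i\bar{j}})\;\ge\;0,
\]
so the linear operator $L_v:=v^{i\bar{j}}\bigl(e_i\bar{e}_j-[e_i,\bar{e}_j]^{(0,1)}\bigr)$, which is uniformly elliptic on $U$ and carries no zero-order term, satisfies $L_v(u-v)\ge 0$ on $U$. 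The strong maximum principle for such operators then forces $u-v$ to be locally constant at any interior maximum, so $E$ is open in $\Omega$. Being simultaneously closed, open, and nonempty, $E$ must contain an entire connected component of $\Omega$, and by continuity of $u-v$ on $\bar{\Omega}$ that component meets $\partial\Omega$, contradicting $M>\max_{\partial\Omega}(u-v)$.

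The main obstacle I anticipate is the possibility that $v$ is only (non-strictly) $J$-psh, which would a priori destroy both the tangent-hyperplane inequality (since $\log\det(v_{i\bar{j}})=-\infty$ at degenerate points) and the uniform ellipticity of $L_v$. The key observation that bypasses this is that strict $J$-pshness of $v$ is automatic on the interior maximum set $E$, and hence on a whole neighborhood $U$ of it, so the entire linear argument only needs to be carried out on $U$. Beyond this, the proof reduces to standard facts: concavity of $\log\det$ on the positive definite Hermitian cone, and the classical strong maximum principle in local coordinates for linear uniformly elliptic operators without zero-order term.
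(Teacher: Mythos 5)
Your proof is correct, but it takes a genuinely different route from the classical argument in \cite{CKNS85} that the paper cites. The standard proof is a global one: write $u_t := tu+(1-t)v$ and integrate the linearization of the determinant along the segment, so that
\begin{equation*}
\det(u_{i\bar{j}})-\det(v_{i\bar{j}})
=\Bigl(\int_0^1 \operatorname{cof}\bigl((u_t)_{i\bar{j}}\bigr)\,dt\Bigr)^{i\bar{j}}\,(u-v)_{i\bar{j}} =: L(u-v)\geq 0.
\end{equation*}
Since each $u_t$ is $J$-psh and, for $t$ near $1$, $(u_t)_{i\bar{j}}\geq t\,u_{i\bar{j}}$ is uniformly positive definite, the averaged cofactor matrix is uniformly positive definite on all of $\Omega$ with bounded continuous coefficients; one then concludes by the \emph{weak} maximum principle, with no need to localize or to control where $v$ degenerates. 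You instead apply the tangent-hyperplane inequality for $\log\det$, which requires $v$ itself to be strictly $J$-psh; your key observation that this is automatic on (and hence near) the interior maximum set $E$ is correct, and the second-order necessary condition argument giving $(v_{i\bar{j}})\geq(u_{i\bar{j}})>0$ on $E$ is sound. The price you pay is the use of the \emph{strong} maximum principle plus the open/closed topology argument to propagate constancy from $E$ to a component of $\Omega$. Both proofs are valid; the integral-linearization route is shorter, avoids the degeneracy issue by design, and works without modification for more general equations, whereas your version has a ``touching'' flavor closer to viscosity-solution arguments.
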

\section{$C^{0}$ and $C^{1}$ estimates}
\subsection{Uniform estimate}
Let $\bar{u}\in C^{2}(\bar{\Omega})$ be a solution of the Dirichlet problem
\begin{equation}\label{a}
	\left\{\begin{array}{ll}
		L(u)=0&\quad \textrm{in}~{\Omega}; \\[1mm]
		u=\varphi& \quad\textrm{on}~ \partial{\Omega},\\[1mm]
	\end{array}\right.
\end{equation}
where $\bar{u}$ could be understood as the $L$-harmonic extension of $\varphi_{|\partial\Omega}$. 
\begin{lemma}
	Let $u$ $($resp. $\underline{u}$ $)$ be the solution $($resp. subsolution$)$ of Eq. $($\ref{dp}$)$. We have that
	\begin{equation}\label{}
		\underline{u}\leq u\leq \bar{u}.
	\end{equation}
\end{lemma}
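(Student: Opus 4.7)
The plan is to establish the two inequalities by separate comparison arguments, exploiting that $\underline{u}$ is a classical subsolution of the \MA{} equation while $\bar{u}$ is by construction $L$-harmonic, where $L$ is the linearization of (\ref{2.1}) at the actual solution $u$.

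For the lower bound $\underline{u} \leq u$, I would apply Lemma \ref{Lma2.2} directly. The hypotheses fit perfectly: $\underline{u}$ is strictly $J$-psh by assumption, $u$ is $J$-psh because it is the (strictly $J$-psh) solution guaranteed by the statement of the main theorem, and in any local $g$-orthonormal frame the inequality
\[
\det(\underline{u}_{i\bar{j}}) \geq e^{h} = \det(u_{i\bar{j}})
\]
holds pointwise in $\Omega$. Lemma \ref{Lma2.2} then asserts that $\underline{u}-u$ attains its maximum on $\partial\Omega$, and since $\underline{u}=u=\varphi$ there, this maximum equals $0$, which gives $\underline{u}\leq u$ throughout $\bar\Omega$.

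For the upper bound $u \leq \bar{u}$, I would use the scalar maximum principle applied to the \emph{linear} elliptic operator $L=u^{i\bar{j}}(e_{i}\bar{e}_{j}-[e_{i},\bar{e}_{j}]^{(0,1)})$. By the definition of the covariant notation $v_{i\bar{j}}$, one has $L(v)=u^{i\bar{j}}v_{i\bar{j}}$ for any $v\in C^{2}$. In particular, evaluating on $u$ itself gives $L(u)=u^{i\bar{j}}u_{i\bar{j}}=n$, while by construction $L(\bar{u})=0$. Therefore
\[
L(u-\bar{u}) = n > 0 \quad \text{in } \Omega,
\]
and $u-\bar{u}=0$ on $\partial\Omega$. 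Since $u$ is strictly $J$-psh, $L$ is uniformly elliptic with no zeroth order term, so the classical weak maximum principle implies that $u-\bar{u}$ attains its maximum on $\partial\Omega$, whence $u\leq\bar{u}$ on $\bar\Omega$.

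Neither half is deep: the ingredients are the comparison principle of Lemma \ref{Lma2.2} (the Caffarelli--Kohn--Nirenberg--Spruck lemma for the complex \MA{} operator, which holds equally well in the almost complex setting because it is purely pointwise-algebraic plus a touching argument) and the standard maximum principle for linear elliptic operators. The only mildly subtle point is conceptual, namely that $\bar{u}$ is defined relative to the linearization $L$ at the solution $u$ one is trying to bound; but once $u$ is fixed, $L$ is a well-defined linear uniformly elliptic operator and the argument is unambiguous.
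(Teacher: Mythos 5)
Your proposal is correct and matches the paper's proof in both halves: the lower bound via the comparison principle of Lemma~\ref{Lma2.2} applied to $\underline{u}$ and $u$, and the upper bound by noting $L(u)=n>0=L(\bar{u})$ and invoking the weak maximum principle for the uniformly elliptic linear operator $L$. The only difference is that you spell out the intermediate steps (e.g.\ checking the determinant inequality and that $L$ has no zeroth order term) which the paper leaves implicit.
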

\begin{proof}
	On the one hand, as $\underline{u}$ is a subsolution of (\ref{dp}), the first inequality follows from Lemma \ref{Lma2.2}. On the other hand, since $L(u)=n$, we know that $u$ is a subsolution of (\ref{a}).  By the maximum principle (for operator $L$), we also get the second inequality. 
\end{proof}
\subsection{Boundary gradient estimate}
\begin{lemma}
	Let $u$ $($resp. $\underline{u}$$)$ be a solution $($resp. subsolution$)$ of Eq. $($\ref{dp}$)$. Then there exists a constant $C=C(\|\underline{u}\|_{C^{1}(\bar{\Omega})},h,\varphi)$ such that
	\begin{equation}\label{}
		\max_{\partial\Omega} |\partial u|\leq C.
	\end{equation}
\end{lemma}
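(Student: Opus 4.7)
The plan is a two-sided boundary-barrier argument: use the subsolution $\underline{u}$ as a lower barrier and the $L$-harmonic extension $\bar{u}$ from the previous lemma as an upper barrier, then split $\partial u$ at $\partial\Omega$ into tangential and normal components. Since $u\equiv\varphi$ on $\partial\Omega$, the tangential derivatives of $u$ at any $x_0\in\partial\Omega$ agree with those of $\varphi$ and are controlled by $\|\varphi\|_{C^{1}(\partial\Omega)}$, so all the work is in the inward-normal component.

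For the normal component, let $\nu$ be the inward unit normal at $x_0$. From the sandwich $\underline{u}\le u\le\bar{u}$ on $\bar\Omega$ established in Lemma~3.2 and the common boundary value $\underline{u}(x_0)=u(x_0)=\bar{u}(x_0)=\varphi(x_0)$, the inequalities
$$\underline{u}(x_0+t\nu)-\varphi(x_0)\;\le\;u(x_0+t\nu)-\varphi(x_0)\;\le\;\bar{u}(x_0+t\nu)-\varphi(x_0)$$
hold for all small $t>0$. Dividing by $t$ and letting $t\to 0^{+}$ gives
$$\partial_\nu\underline{u}(x_0)\;\le\;\partial_\nu u(x_0)\;\le\;\partial_\nu\bar{u}(x_0),$$
so $|\partial_\nu u(x_0)|\le\max\{\|\underline{u}\|_{C^{1}(\bar\Omega)},\,|\partial_\nu\bar{u}(x_0)|\}$. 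The first term appears directly in the claimed constant.

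The remaining task, and the only non-trivial step, is a boundary $C^{1}$ bound for the $L$-harmonic extension $\bar{u}$. Because $u$ is strictly $J$-psh, $L$ is uniformly elliptic with smooth coefficients, and $\bar{u}$ solves the linear Dirichlet problem \eqref{a} with smooth data $\varphi$. The standard device is to build boundary barriers of the form $\varphi\pm A(-\rho)$, where $\rho$ is a smooth defining function of $\partial\Omega$ with $\rho<0$ in $\Omega$ and $|d\rho|=1$ on $\partial\Omega$, and $A$ is chosen large enough that $L\bigl(\varphi\pm A(-\rho)\bigr)$ has the correct sign on a neighborhood of $\partial\Omega$; then the maximum principle for $L$ applied to $\bar{u}-\varphi\mp A(-\rho)$ yields $|\bar{u}-\varphi|\le A|\rho|$ in that neighborhood, hence $|\partial_\nu\bar{u}(x_0)|\le A$.

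\textbf{Main obstacle.} The delicate point is ensuring that $A$ is controlled by $(h,\varphi,\|\underline{u}\|_{C^{1}})$ and the background geometry alone, not by further norms of the unknown $u$. This is where the strict $J$-plurisubharmonicity of $\underline{u}$, through \eqref{tau}, and the equation $\det(u_{i\bar j})=e^{h}$ are used to control the trace $\sum u^{i\bar i}$ appearing in the coefficients of $L$; the mechanism is precisely the content of Lemma~\ref{Lma2.1}. Once $A$ is so controlled, combining the tangential estimate with the two-sided normal estimate gives the stated bound $\max_{\partial\Omega}|\partial u|\le C$.
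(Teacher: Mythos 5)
Your argument follows the paper's: both start from the sandwich $\underline{u}\le u\le\bar{u}$ of Lemma~3.3 together with the common boundary value to obtain $|\partial u|\le\sup\{|\partial\underline{u}|,|\partial\bar{u}|\}$ on $\partial\Omega$, and the paper then simply writes ``the lemma follows.'' Your additional discussion correctly locates the step the paper compresses, namely why $|\partial\bar{u}|$ on $\partial\Omega$ is controlled by allowable data, but the attribution to Lemma~\ref{Lma2.1} is not quite right. What actually makes the barrier constant $A$ independent of $u$ is the unconditional trace bound $\sum_i u^{i\bar{i}}\ge n\big(\prod_i u^{i\bar{i}}\big)^{1/n}=n e^{-h/n}$ from the arithmetic--geometric mean inequality applied to $\det(u_{i\bar{j}})=e^{h}$, combined with the observation that for any fixed $\psi\in C^{2}(\bar{\Omega})$ one has $|L(\psi)|\le C(\psi)\sum_i u^{i\bar{i}}$; a strictly $J$-psh function (e.g.\ $\underline{u}$ itself, via $(\ref{tau})$, which gives $L(\underline{u})\ge\tau\sum_i u^{i\bar{i}}$) then supplies the needed comparison term. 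Lemma~\ref{Lma2.1} is a conditional estimate for $L(\underline{u}-u)$ valid only when $\sum_i u_{i\bar{i}}\ge N$, and it is invoked later for the interior $C^{1}$ and $C^{2}$ estimates rather than here.
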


\begin{proof}
	By the previous lemma, together with the fact that $u,\underline{u}$ and $h$ have the same boundary value $\varphi_{|\partial\Omega}$, we have $|\partial u|\leq \sup\{|\partial \underline{u}|, |\partial \bar{u}|\}$ on $\partial\Omega$, and the lemma follows.
\end{proof}
\subsection{Global gradient estimate}
\begin{proposition}
	Let $u$ $($resp. $\underline{u}$$)$ be a solution $($resp. subsolution$)$ of Eq. $($\ref{dp}$)$. Then
	\begin{equation}\label{c1}
		\max_{\overline\Omega} |\partial u|\leq C
	\end{equation}
	for some positive constant $C=C(\|\underline{u}\|_{C^{1}(\bar{\Omega})}, \|u\|_{C^{0}(\bar{\Omega})}, \|u\|_{C^{0,1}(\partial{\Omega})},h)$.
\end{proposition}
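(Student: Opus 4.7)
The plan is to apply the maximum principle to the auxiliary function
\[W\ =\ \log|\partial u|^{2}\ +\ A(\underline{u}-u),\]
where $|\partial u|^{2}=\sum_{k}u_{k}u_{\bar k}$ is measured in a local $g$-unitary frame and $A>0$ is a large constant to be chosen at the end. Since the boundary gradient estimate has already been established, it suffices to treat the case in which $W$ attains its maximum at an interior point $p\in\Omega$. At such a $p$ I would fix a $g$-unitary frame $\{e_{i}\}$ in which $(u_{i\bar j})(p)$ is diagonal, and exploit the first and second order optimality conditions $\nabla W(p)=0$ and $LW(p)\leq 0$, where $L=u^{i\bar j}\nabla_{i}\nabla_{\bar j}$ is the linearized operator.

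The heart of the argument is the computation of $L|\partial u|^{2}$ at $p$. Differentiating $\log\det(u_{i\bar j})=h$ in the direction $e_{k}$ and commuting $\nabla_{k}$ past $\nabla_{i}\nabla_{\bar j}$ with the canonical connection yields an identity of the shape $u^{i\bar j}\nabla_{i}\nabla_{\bar j}u_{k}=h_{k}+E_{k}$, where $E_{k}$ is linear in $\partial u$ and controlled by $C(1+|\partial u|)(1+\sum_{i}u^{i\bar i})$. Substituting back produces the good second-order terms $\sum_{i}u^{i\bar i}\bigl(|\nabla_{i}\partial u|^{2}+|\nabla_{\bar i}\partial u|^{2}\bigr)$ together with manageable lower-order contributions, and the critical-point identity $\nabla W(p)=0$ rewrites $\nabla|\partial u|^{2}=-A|\partial u|^{2}\nabla(\underline{u}-u)$, so that by Cauchy--Schwarz the quadratic gradient term coming from the $\log$ in $W$ is absorbed into the good second-order terms. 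After this, the surviving bad terms are all bounded by $C(1+|\partial u|^{2})(1+\sum_{i}u^{i\bar i})$, which upon dividing by $|\partial u|^{2}$ reduces to $C(1+\sum_{i}u^{i\bar i})$ as soon as $|\partial u|(p)$ is large.

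I would then invoke Lemma~\ref{Lma2.1}. If $\sum_{i}u_{i\bar i}(p)\geq N$, then $L\bigl(A(\underline{u}-u)\bigr)\geq A\theta(1+\sum_{i}u^{i\bar i})$, and choosing $A$ sufficiently large depending on $\theta$, $\|\underline{u}\|_{C^{1}(\bar\Omega)}$, $\|u\|_{C^{0}(\bar\Omega)}$ and $h$ absorbs the bad contributions, forcing $|\partial u|(p)\leq C$ and hence (\ref{c1}). If instead $\sum_{i}u_{i\bar i}(p)<N$, then the equation $\prod_{i}u_{i\bar i}=e^{h}$ combined with the lower bound $\inf_{\bar\Omega}h>-\infty$ gives a uniform two-sided positive bound on each eigenvalue of $(u_{i\bar j})(p)$, so that $L$ is uniformly elliptic at $p$ with controlled constants, and a direct maximum principle argument applied to $|\partial u|^{2}$ itself closes the case.

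The main obstacle I expect is the careful bookkeeping of the torsion and curvature terms coming from the canonical almost-Hermitian connection. Unlike the integrable Hermitian case, this connection carries nonvanishing $(2,0)+(0,2)$-torsion arising from the Nijenhuis tensor of $J$, so that the commutators $[\nabla_{i},\nabla_{\bar j}]$ and $[\nabla_{i},\nabla_{j}]$ generate extra first-derivative contributions both when differentiating the equation in $e_{k}$ and when expanding $L|\partial u|^{2}$. These torsion pieces must be shown to grow at most like $C(1+|\partial u|^{2})(1+\sum_{i}u^{i\bar i})$, so that the subsolution term from Lemma~\ref{Lma2.1} absorbs them; the bookkeeping follows the scheme of \cite{TWWY15} and \cite{P14}, adapted to the $J$-plurisubharmonic setting considered here.
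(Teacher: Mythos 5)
Your overall strategy matches the paper's: apply the maximum principle to a weighted gradient quantity at an interior maximum, differentiate $\log\det(u_{i\bar j})=h$ to produce a good third-order term, and then split into the two cases $\sum_i u_{i\bar i}\geq N$ and $\sum_i u_{i\bar i}<N$ governed by Lemma~\ref{Lma2.1}. However, the choice of test function $W=\log|\partial u|^{2}+A(\underline{u}-u)$ with a \emph{linear} weight in $\underline{u}-u$ has a genuine gap, and this gap is exactly the reason the paper uses the doubly exponential weight $\vartheta=\frac13 e^{B\eta}$ in $V=e^{\vartheta}|\partial u|^2$.

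Concretely, after dividing $L(V)\le 0$ by $B\vartheta e^{\vartheta}|\partial u|^2$, the paper obtains the crucial \emph{surplus} positive term $B(1+\vartheta)u^{i\bar i}|\eta_i|^2$ in (\ref{3.9}); this arises only because the weight $\vartheta$ is strictly convex in $\eta$. The cross terms $(*)$ and $(**)$ in (\ref{3..9})--(\ref{3..10}) must then be absorbed, and the Cauchy--Schwarz bound for $(**)$ necessarily spends $(1+2\ve)B\vartheta\,u^{i\bar i}|\eta_i|^2$ in order to match the good third-order term $\frac{1-\ve}{B\vartheta|\partial u|^2}\sum_j u^{i\bar i}|\bar e_i\bar e_j u|^2$ (since $|\bar e_i\bar e_j u|=|e_ie_ju|$, this saturates the $|e_ie_ju|^2$ part of $III$ exactly). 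Together with the $\ve B\vartheta\,u^{i\bar i}|\eta_i|^2$ spent on $(*)$, the total cost is $(1+3\ve)B\vartheta\,u^{i\bar i}|\eta_i|^2$. The paper's surplus term covers this with margin: $B(1+\vartheta)-(1+3\ve)B\vartheta=B(1-3\ve\vartheta)=\tfrac{B}{2}$ after setting $\ve=\tfrac{1}{6\vartheta(x_0)}$, and this residual $\tfrac{B}{2}u^{i\bar i}|\eta_i|^2$ is used in Subcase~1(ii) and Case~2.

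With your linear weight, the analogous computation (either via $L(W)$ and the critical-point identity $\nabla|\partial u|^2=-A|\partial u|^2\nabla\psi$, or via $L(e^{A\psi}|\partial u|^2)$) produces the positive term $A\,u^{i\bar i}|\psi_i|^2$ in place of $B(1+\vartheta)u^{i\bar i}|\eta_i|^2$, while the cross terms still cost $(1+3\ve)A\,u^{i\bar i}|\psi_i|^2$. The net coefficient is $-3\ve A<0$, and you cannot send $\ve\to 0$ because the companion term $\tfrac{C}{\ve}\mathcal{U}$ blows up. Equivalently: the critical-point identity turns the log-correction into $-A^2 u^{i\bar i}|\psi_i|^2$, but the reverse direction of the same identity shows $A^2 u^{i\bar i}|\psi_i|^2\le \frac{2}{|\partial u|^2}\sum_j u^{i\bar i}(|e_ie_ju|^2+|e_i\bar e_ju|^2)$, while the good term from $L(|\partial u|^2)/|\partial u|^2$ only has coefficient $1-\ve<2$. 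So the absorption you propose by ``Cauchy--Schwarz into the good second-order terms'' fails by a factor-of-two deficit; a strictly convex weight is indispensable. Secondarily, in your Case~2 the claim that uniform ellipticity lets you ``close the case by a direct maximum principle argument applied to $|\partial u|^2$ itself'' is too vague: the paper's Case~2 still works inside the inequality (\ref{3.11}), using $\ddbar\underline{u}\ge\tau\omega$ to get $L(\eta)\ge\tau\mathcal{U}-n$ and the lower bound $u^{k\bar k}\ge N^{-1}$ to convert $u^{i\bar i}|\eta_i|^2$ into $|\partial\eta|^2$, rather than restarting with a fresh test function.
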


\begin{proof}
	Let $\vartheta= \frac{1}{3}e^{B\eta}$ for $\eta=\underline{u}-u+\sup_{\bar{\Omega}}(u-\underline{u}),$ where $ B>0$ is a constant to be picked up later. We will prove (\ref{c1}) by applying the maximum principle to 
	\[V:= e^{\vartheta}|\partial u|^{2}.\]
	Suppose that $V$ achieves its maximum at $x_{0}\in \textrm{Int}(\Omega)$. Near $x_{0}$, we choose a local $g$-unitary frame  $(e_{1},\cdots,e_{n})$ such that $g_{i\bar{j}}=\delta_{ij}$. Moreover,  the matrix $(u_{i\bar{j}})$ is diagonal at $x_{0}$.
	
	At  $x_{0}$, it follows from the maximum principle that
	\begin{equation}\label{3.9}
		\begin{split}
			0\geq &\frac{L(V)}{B\vartheta e^{\vartheta}|\partial u|^{2}}
			=\frac{L(e^{\vartheta})}{B\vartheta e^{\vartheta}}+\frac{L(|\partial u|^{2})}{B\vartheta|\partial u|^{2}}+2u^{i\bar{i}}\textrm{Re}\big(e_{i}({\vartheta})
			\frac{\bar{e}_{i}(|\partial u|^{2})}{B\vartheta|\partial u|^{2}}\big)\\
			=&L(\eta)+B(1+\vartheta)u^{i\bar{i}}|\eta_{i}|^{2}+\frac{L(|\partial u|^{2})}{B\vartheta|\partial u|^{2}}+\frac{1}{|\partial u|^{2}}\cdot \big((*)+(**)\big),\\
		\end{split}
	\end{equation}
	where 
	\begin{alignat}{2} 
		& (*)
		& & :=2\sum_{j=1}^{n}u^{i\bar{i}}\textrm{Re}\big(e_{i}(\eta)\bar{e}_{i}e_{j}(u)\bar{e}_{j}(u)\big); \\ 
		&(**)
		& & :=2\sum_{j=1}^{n}u^{i\bar{i}}\textrm{Re}\big(e_{i}(\eta)\bar{e}_{i}\bar{e}_{j}(u)e_{j}(u)\big).
	\end{alignat}
	\footnote{The constants $C, C'$ in the rest of the section are distinct, where $C$ is a constant depending on all the allowed data, but  $C'$ further depends on a constant $B$ that we are yet to choose.}By a straightforward calculation,
	\[
	L(|\partial u|^{2})=u^{i\bar{i}}\big({e_{i}e_{\bar{i}}}(|\partial u|^{2})-[e_{i},\bar{e}_{i}]^{0,1}(|\partial u|^{2})\big):= I+II+III,
	\]
	where
	\begin{alignat}{2} 
		& I
		& & :=u^{i\bar{i}}(e_{i}\bar{e}_{i}e_{j}u-[e_{i},\bar{e}_{i}]^{0,1}e_{j}u)\bar{e}_{j}u; \\ 
		&II
		& & :=u^{i\bar{i}}(e_{i}\bar{e}_{i}\bar{e}_{j}u-[e_{i},\bar{e}_{i}]^{0,1}\bar{e}_{j}u)e_{j}u;\\
		&III
		&& :=u^{i\bar{i}}(|e_{i}e_{j}u|^{2}+|e_{i}\bar{e}_{j}u|^{2}).
	\end{alignat}
	Differentiating (\ref{2.1}) along $e_{j}$, 
	\[u^{i\bar{i}}(e_{j}e_{i}\bar{e}_{i}u-e_{j}[e_{i},\bar{e}_{i}]^{0,1}u)=h_{j}.
	\]
	Notice that
	\[
	\begin{split}
		& u^{i\bar{i}}(e_{i}\bar{e}_{i}e_{j}u-[e_{i},\bar{e}_{i}]^{0,1}e_{j}u) \\
		= &  u^{i\bar{i}}(e_{j}e_{i}\bar{e}_{i}u+e_{i}[\bar{e}_{i},e_{j}]u
		+[e_{i},e_{j}]\bar{e}_{i}u-[e_{i},\bar{e}_{i}]^{0,1}e_{j}u)\\
		= &h_{j}+u^{i\bar{i}}e_{j}[e_{i},\bar{e}_{i}]^{0,1}u
		+u^{i\bar{i}}(e_{i}[\bar{e}_{i},e_{j}]u
		+[e_{i},e_{j}]\bar{e}_{i}u-[e_{i},\bar{e}_{i}]^{0,1}e_{j}u)\\
		=&h_{j}+ u^{i\bar{i}}\big\{e_{i}[\bar{e}_{i},e_{j}]u+\bar{e}_{i}[e_{i},e_{j}]u+[[e_{i},e_{j}],\bar{e}_{i}]u
		-[[e_{i},\bar{e}_{i}]^{0,1},e_{j}]u\big\}
		.\end{split}
	\]
	We may assume that $|\partial u|\gg 1$ (otherwise we are done), and set $$\mathcal{U}:=\sum_{i=1}^{n}u^{i\bar{i}}.$$ 
	By the Cauchy-Schwarz inequality, for each $0<\ve\leq \frac{1}{2}$,
	\begin{equation}\label{4.9}
		\begin{split}
			I+II
			\geq& 2\textrm{Re}\big(\sum_{j=1}^{n}h_{j}u_{\bar{j}}\big)-C|\partial u|\sum_{j=1}^{n}u^{i\bar{i}}(|e_{i}e_{j}u|+|e_{i}\bar{e}_{j}u|)-C|\partial u|^{2} \mathcal{U}\\
			\geq & 2\textrm{Re}\big(\sum_{j=1}^{n}h_{j}u_{\bar{j}}\big)-\frac{C}{\varepsilon}|\partial u|^{2}\mathcal{U}-\ve\sum_{j=1}^{n} u^{i\bar{i}}(|e_{i}e_{j}u|^{2}+|e_{i}\bar{e}_{j}u|^{2}).
		\end{split}
	\end{equation}
	It then follows from (\ref{3.9}) that
	\begin{equation}\label{3.7}
		\begin{split}
			\frac{L(|\partial u|^{2})}{B\vartheta|\partial u|^{2}}\geq& \frac{-C}{B\vartheta|\partial u|}+ (1-\varepsilon) \sum_{j=1}^{n}u^{i\bar{i}}\frac{|e_{i}e_{j}u|^{2}+|e_{i}\bar{e}_{j}u|^{2}}{B\vartheta|\partial u|^{2}}- \frac{C\mathcal{U}}{B\vartheta\varepsilon} .
		\end{split}
	\end{equation}
	As $0<\varepsilon\leq \frac{1}{2}$, $1\leq (1-\varepsilon)(1+2\varepsilon)$. Thus, 
	\begin{equation}\label{3..9}
		\begin{split}
			(*)=& 2\sum_{j=1}^{n}u^{i\bar{i}}\textrm{Re}\Big(e_{i}(\eta)
			\bar{e}_{j}(u)\big\{e_{j}\bar{e}_{i}(u)-[e_{j},\bar{e}_{i}]^{0,1}(u)
			-[e_{j},\bar{e}_{i}]^{1,0}(u)\big\}\Big)\\
			=&   2\textrm{Re}\big(\sum_{j=1}^{n}\eta_{j}u_{\bar{j}}\big)-2\sum_{j=1}^{n}u^{i\bar{i}}\textrm{Re}\big(e_{i}(\eta)
			\bar{e}_{j}(u)[e_{j},\bar{e}_{i}]^{1,0}(u)\big)\\
			\geq&  2\textrm{Re}\big(\sum_{j=1}^{n}\eta_{j}u_{\bar{j}}\big)-\varepsilon B\vartheta|\partial u|^{2}u^{i\bar{i}}|\eta_{i}|^{2}
			-\frac{C}{B\vartheta\varepsilon}|\partial u|^{2}\mathcal{U};
		\end{split}
	\end{equation}
	\begin{equation}\label{3..10}
		\begin{split}
			(**)\geq & -\frac{(1-\varepsilon)}{B\vartheta}\sum_{j=1}^{n}u^{i\bar{i}}|\bar{e}_{i}\bar{e}_{j}(u)|^{2}
			-(1+2\varepsilon)B\vartheta|\partial u|^{2}u^{i\bar{i}}|\eta_{i}|^{2}.
		\end{split}
	\end{equation}
	It follows from (\ref{3..9}) and (\ref{3..10}) that
	\begin{equation}\label{3.12}
		\begin{split}
			\frac{1}{|\partial u|^{2}}\cdot \big((*)+(**)\big)
			\geq &\frac{2\textrm{Re}\big(\sum_{j=1}^{n}\eta_{j}u_{\bar{j}}\big)}{|\partial u|^{2}}-(1+3\ve)B\vartheta u^{i\bar{i}}|\eta_{i}|^{2}
			\\&-\frac{C}{B\vartheta\varepsilon}\mathcal{U}
			-(1-\varepsilon)\sum_{j=1}^{n}u^{i\bar{i}}\frac{|\bar{e}_{i}\bar{e}_{j}(u)|^{2}}{B\vartheta|\partial u|^{2}}.
		\end{split}
	\end{equation}
	Combining (\ref{3.9}), (\ref{3.7}) and (\ref{3.12}) gives us
	\[
	0\geq L(\eta)+B(1-3\ve w) u^{i\bar{i}}|\eta_{i}|^{2}-\frac{C\mathcal{U}}{B\vartheta\varepsilon}
	-\frac{C}{B\vartheta|\partial u|}
	+\frac{2\textrm{Re}\big(\sum_{j=1}^{n}\eta_{j}u_{\bar{j}}\big)}{|\partial u|^{2}}.
	\]
	Hence, if we choose $\ve=\frac{1}{6\vartheta(x_0)}\leq \frac{1}{2}$, 
	\begin{equation}\label{3.11}
		\begin{split}
			&L(\eta)+ \frac{2\textrm{Re}\big(\sum_{j=1}^{n}\eta_{j}u_{\bar{j}}\big)}{|\partial u|^{2}}
			+\frac{B}{2}
			u^{i\bar{i}}|\eta_{i}|^{2}
			\leq \frac{C}{B\vartheta|\partial u|}+ \frac{C}{B}\mathcal{U}.
		\end{split}
	\end{equation}
	~\\
	\textbf{Case 1.} $\sum_{i=1}^{n}u_{i\bar{i}}\geq N$ for some  \textit{N}  as in Lemma \ref{Lma2.1}. We divide the proof into two parts.
	
	\textbf{Subcase 1(i)} If $u^{j\bar{j}}\geq D$ for some $j$, where $D>0$ is a large constant to be determined shortly. Thus
	\[
	L(\eta)\geq\theta+\theta\mathcal{U}\geq \theta+\frac{D\theta}{2}+\frac{\theta}{2}\mathcal{U}.
	\]
	We may assume that $|\partial u|\geq |\partial\underline{u}|$, whence
	$|\partial \eta|\leq 2|\partial u|$, then
	\begin{equation}\label{case 1a}
		\frac{2\textrm{Re}\big(\sum_{j=1}^{n}\eta_{j}u_{\bar{j}}\big)}{|\partial u|^{2}}\geq -4.	
	\end{equation}
	Substituting this into (\ref{3.11}),
	\begin{equation*}\label{}
		\theta+\frac{D\theta}{2}-4+\big(\frac{\theta}{2}-\frac{C}{B}\big)\mathcal{U}\leq  \frac{C}{B\vartheta|\partial u|}.
	\end{equation*}
	~\\
	We may choose $B, D$ sufficiently large such that $\theta\geq \frac{C}{B}$ and $D\theta\geq 8$, whence (\ref{c1}) follows.
	
	\textbf{Subcase 1(ii)} If $u^{j\bar{j}}\leq D$ for each $j=1,2,\cdots,n$, since $|\partial u|\geq \max\{1,|\partial \underline{u}|\}$, 
	\[
	\begin{split}
		\frac{2\textrm{Re}\big(\sum_{j=1}^{n}\eta_{j}u_{\bar{j}}\big)}{|\partial u|^{2}} \geq    -\frac{B}{4}u^{i\bar{i}}|\eta_{i}|^{2}
		-\frac{4}{B|\partial u|^{2}}\sum_{i=1}^{n}u_{i\bar{i}},
	\end{split}
	\]
	and it follows from (\ref{3.11}) that
	\[
	\theta
	+\theta\mathcal{U}\leq  \frac{C}{B\vartheta|\partial u|}+ \frac{C}{B}\mathcal{U}+\frac{4}{B|\partial u|^{2}}\sum_{i=1}^{n}u_{i\bar{i}}.
	\]
	Notice that $\theta\geq \frac{C}{B}$. Thus,
	\begin{equation}\label{3.14}
		\theta\leq \frac{C}{B\vartheta|\partial u|}+\frac{4}{B|\partial u|^{2}}\sum_{i=1}^{n}u_{i\bar{i}}.
	\end{equation}
	It is useful to order $\{u_{i\bar{i}}\}_{i=1}^{n}$ such that $u_{1\bar{1}}\geq \cdots\geq u_{n\bar{n}}$ at $x_{0}$.
	Thus,
	$u_{1\bar{1}}D^{-(n-1)}\leq \prod_{i=1}^{n}u_{i\bar{i}}=e^{h}.$
	Then we have
	\[
	\sum_{i=1}^{n}u_{i\bar{i}}\leq n u_{1\bar{1}}\leq ne^{\sup_{\bar{\Omega}}h}D^{n-1}.
	\]
	Substituting this into  (\ref{3.14}), we get that $|\partial u|\leq C'$.
	
	~\\
	\textbf{Case 2.}  $\sum_{i=1}^{n}u_{i\bar{i}}\leq N$, so $u^{k\bar{k}}\geq N^{-1}$ for each $k$.
We have that
	\begin{equation}\label{case 2-1}
		u^{i\bar{i}}|\eta_{i}|^{2}\geq N^{-1}|\partial \eta|^{2}.	
	\end{equation}
The fact that $\underline{u}$ is strictly $J$-psh implies that
	\begin{equation}\label{case 2-2}
		L(\eta)\geq \tau \mathcal{U}-n.
	\end{equation} 
	It follows from (\ref{3.11}), (\ref{case 1a}), (\ref{case 2-1}) and (\ref{case 2-2}) that
	\[
	(\tau-\frac{C}{B}) \mathcal{U}+
	B N^{-1}|\partial \eta|^{2}\leq \frac{C}{B\vartheta|\partial \eta|}+5n,
	\]
	since $|\partial u|\geq \max\{1,|\partial \underline{u}|\}$. We further assume that $\tau\geq \frac{C}{B}$, so
	\[
	B N^{-1}|\partial \eta|^{2}\leq \frac{C}{B\vartheta|\partial \eta|}+5n,
	\]
	which implies $|\partial\eta|\leq C'$,  whence
	(\ref{c1}) follows.
\end{proof}
\section{Interior $C^{2}$ estimate}
In this section we follow the arguments of \cite{CTW} to estimate the largest eigenvalue $\lambda_{1}(\hat{\nabla}^{2}u)$ of the real Hessian $\hat{\nabla}^{2}u$, where $\hat{\nabla}$ is the Levi-Civita connection on $M$.
\begin{theorem}\label{Thm4.1}
	Let $u$ $($resp. $\underline{u}$$)$ be a solution $($resp. subsolution$)$ of Eq. $($\ref{dp}$)$. We have
	\begin{equation}\label{4.1}
		\max_{\bar{\Omega}}\lambda_{1}(\hat{\nabla}^{2}u)\leq C(1+\underset{\partial{\Omega}}{\max}|\ddbar u|),
	\end{equation}
	where $C$ is a constant depending  on $\|h\|_{C^{2}(\Omega)}, \|u\|_{C^{1}(\bar{\Omega})}$ and $\|\underline{u}\|_{C^{2}(\bar{\Omega})}$.
\end{theorem}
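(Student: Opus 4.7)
The plan is a maximum-principle argument applied to a test function combining the top real Hessian eigenvalue with the subsolution barrier. Concretely, I would consider
\[
Q = \log \lambda_{1}(\hat{\nabla}^{2}u) + \phi(|\partial u|^{2}) + A(\underline{u} - u),
\]
for a suitable smooth convex increasing $\phi$ and a large constant $A \gg 1$ to be chosen. If $\max_{\bar\Omega}Q$ is attained on $\partial\Omega$, then (\ref{4.1}) follows immediately from the boundary bound on $|\ddbar u|$ together with the already-established $C^{1}$ estimate. So the content is the case when the maximum is attained at some interior point $x_{0}\in\Omega$.

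At $x_0$, since $\lambda_1$ may be non-simple, I would first apply the Chu--Tosatti--Weinkove perturbation trick: pick a $g$-orthonormal real frame $\{E_\alpha\}_{\alpha=1}^{2n}$ diagonalizing $\hat\nabla^{2}u$ at $x_0$ and perturb $\lambda_1$ by a symmetric tensor supported on the lower eigenspaces to obtain a smooth simple eigenvalue $\tilde\lambda_1$ in a neighbourhood of $x_0$ which agrees with $\lambda_1$ at $x_0$. Simultaneously, fix a local $g$-unitary frame $\{e_i\}$ of $T_{1,0}M$ such that $\{u_{i\bar j}\}$ is diagonal at $x_0$. Applying $L$ to $Q$ at $x_0$, differentiating the equation $\log\det(u_{i\bar j}) = h$ once and twice along $E_1$, and commuting derivatives using the canonical connection $\nabla$, one obtains a schematic lower bound
\[
0 \ge E_{\mathrm{conc}} + \phi''\, u^{i\bar i}\bigl|\hat\nabla_i|\partial u|^{2}\bigr|^{2} + \phi'\, L(|\partial u|^{2}) + A\, L(\underline u - u) - \mathcal{E}_{3},
\]
where $E_{\mathrm{conc}} := \sum_{\alpha>1}\frac{u^{i\bar i}(|\hat\nabla_i u_{1\alpha}|^{2} + |\hat\nabla_{\bar i}u_{1\alpha}|^{2})}{\tilde\lambda_1(\lambda_1-\lambda_\alpha)} \ge 0$ is the standard CTW concavity term and $\mathcal{E}_{3}$ collects all uncontrolled third-order error terms. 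Lemma \ref{Lma2.1} then provides $A\,L(\underline u - u) \ge A\theta(\mathcal{U}+1)$ in the regime $\sum u_{i\bar i}\ge N$, while in the opposite regime $\sum u_{i\bar i}\le N$ the ellipticity bound $u^{k\bar k}\ge N^{-1}$ combined with (\ref{tau}) yields $L(\underline u - u) \ge \tau \mathcal{U} - n$. As in the gradient estimate, I would split into these two Guan cases and choose $A$ and $\phi$ large enough to absorb all bad terms and close the estimate.

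The principal obstacle, relative to the integrable Hermitian setting, will be controlling the third-order error $\mathcal{E}_{3}$ stemming from the non-integrability of $J$: every commutator $[e_i,\bar e_j]^{(0,1)}$ produces Nijenhuis-tensor contributions, and the real Hessian $\hat\nabla^{2}u$ does not match the complex Hessian $(u_{i\bar j})$ cleanly, so genuine third-order real derivatives $\hat\nabla^{3}u$ appear which are not bounded a priori. The strategy is to absorb them into the positive quadratic form $u^{i\bar i}(|e_i e_j u|^{2} + |e_i\bar e_j u|^{2})$ arising inside $L(|\partial u|^{2})$ (as in Sect.~3) and into $E_{\mathrm{conc}}$, using the first-order critical conditions at $x_0$ to rewrite $\hat\nabla_i\log\tilde\lambda_1$ in terms of $\hat\nabla_i|\partial u|^{2}$ and $\hat\nabla_i(\underline u-u)$. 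The bounded torsion and curvature of $\nabla$ then enter only through the allowed constant $C$.
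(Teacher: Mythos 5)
Your framework is the right one (maximum principle on a CTW-style quantity, perturbed eigenvalue $\Phi$, differentiating the equation along $V_1$ and matching against the second-derivative-of-eigenvalue concavity term), and this is indeed the route the paper follows. However, there are two concrete gaps that would make the argument as written fail.

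First, the barrier $A(\underline u-u)$ is too weak. After using the critical equation $\hat\nabla_i\log\lambda_1=-\phi'\hat\nabla_i|\partial u|^2-A\hat\nabla_i(\underline u-u)$ to rewrite the bad term $-u^{i\bar i}|e_i(\lambda_1)|^2/\lambda_1^2$, you produce, besides the $(\phi')^2u^{i\bar i}|e_i(|\partial u|^2)|^2$ piece (which $\phi''=2(\phi')^2$ absorbs), a piece of size $A^2 u^{i\bar i}|(\underline u-u)_i|^2\sim A^2\mathcal{U}$. But the linear barrier contributes only $A\,L(\underline u-u)\geq A\theta(\mathcal{U}+1)$, linear in $A$, so taking $A$ large makes things worse, not better. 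This is precisely why the paper uses the exponential barrier $e^{B\varpi}$ with $\varpi=\underline u-u+\sup(u-\underline u)+1$: its linearization produces the quadratic term $B^{2}e^{B\varpi}u^{i\bar i}|\varpi_i|^{2}$, and the final choice $6\ve e^{B\varpi(x_0)}=1$ lets the two $B^{2}u^{i\bar i}|\varpi_i|^2$ contributions cancel exactly while $Be^{B\varpi}L(\varpi)$ still dominates the remaining $\frac{C}{\ve}\mathcal{U}$. You should replace your linear barrier by the exponential one (or any barrier whose second derivative contributes a genuine quadratic gradient term).

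Second, the case split you propose -- Guan's dichotomy $\sum u_{i\bar i}\gtrless N$ -- is not the one that closes the estimate here. The paper disposes of $\sum u_{i\bar i}\leq N$ at the outset and then spends the entire argument on the three eigenvalue-ratio cases: (i) $u^{n\bar n}\leq B^3e^{2B\varpi}u^{1\bar 1}$, (ii) $\frac{\phi'}{4}\sum u^{i\bar i}(|e_ie_ju|^2+|e_i\bar e_ju|^2)$ dominates the barrier error, and (iii) neither, in which case one introduces the index set $I=\{i: u^{n\bar n}\geq B^3e^{2B\varpi}u^{i\bar i}\}$ and must decompose $e_i(\lambda_1)$ along $\tilde e_1$ and $JV_1$ (Lemmas \ref{coeff}--\ref{final}). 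That decomposition, together with the two subcases on $\lambda_1+\sum_{\alpha>1}\lambda_\alpha\mu_\alpha^{2}$ versus $2(1-\ve)u_{\tilde 1\bar{\tilde 1}}$, is where the matching of $-(1+\ve)u^{i\bar i}|e_i(\lambda_1)|^2/\lambda_1^2$ against $(2-\ve)\sum_{\alpha>1}u^{i\bar i}|e_i(u_{V_\alpha V_1})|^2/(\lambda_1(\lambda_1-\lambda_\alpha))$ actually happens; your sentence ``absorb them into the positive quadratic form $\ldots$ and into $E_{\mathrm{conc}}$'' names the difficulty but does not resolve it. As it stands the proposal gives a plausible outline but omits the two key devices -- the quadratic barrier term and the $I$/$I^c$ splitting with the $\tilde e_1,JV_1$ decomposition -- that the proof actually turns on.
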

\begin{proof}
For brevity, we denote  $\varpi:= \underline{u}-u+\sup_{\bar{\Omega}}(u-\underline{u})+1$. Define
\[\mathcal{Q}:=\log \lambda_{1}(\hat{\nabla}^{2}u)+\phi(|\partial u|^{2})+e^{B\varpi}\]
in $\Omega':=\{\lambda_{1}(\hat{\nabla}^{2}u)>0\}\subseteq \Omega$, where $B$ is a large constant to be determined later, and $\phi$ is defined by
\[\phi(s):=-\frac{1}{2}\log(1+\sup_{\bar{\Omega}}|\partial u|^{2}-s).\]
Setting $K:=1+\sup_{\bar{\Omega}}|\partial u|^{2}$, we have that
\[
\frac{1}{2K}\leq \phi'(|\partial u|^{2})\leq \frac{1}{2}, \qquad \phi''=2(\phi')^{2}.
\]
We may assume that $\Omega'$ is a nonempty (relative) open set, otherwise we are done. As $z$ approaches $\partial{\Omega'}\setminus\partial{\Omega}$, $\mathcal{Q}\rightarrow -\infty$, if $\mathcal{Q}$ achieves its maximum on $\partial{\Omega}$, then we are done, by (\ref{4.1}). Thus, we may assume that $\mathcal{Q}$ achieves its maximum  in $\textrm{Int}(\Omega')$.  Near $x_{0}$, we choose a local $g$-unitary frame $(e_{1},\cdots, e_{n})$ such that, at $x_{0}$,
\begin{equation}\label{5..6}
	\text{$g_{i\overline{j}}=\delta_{ij}$, $u_{i\overline{j}}=\delta_{ij}u_{i\overline{i}}$ and $u_{1\overline{1}}\geq u_{2\overline{2}}\geq\cdots\geq u_{n\overline{n}}$.}
\end{equation}
In addition, there exists a normal coordinate system $(U,\{x^{\alpha}\}_{i=1}^{2n})$ in a neighbourhood of $x_{0}$ such that
\begin{equation}\label{real frame and complex frame}
	e_{i}=\frac{1}{\sqrt{2}}(\partial_{2i-1}-\sqrt{-1}\partial_{2i}) \qquad \text{for} \ i=1,\cdots,n;
\end{equation}
\begin{equation}\label{first derivative of background metric}
	\frac{\partial g_{\alpha\beta}}{\partial x^{\gamma}}=0 \qquad \text{for} \ \alpha,\beta,\gamma=1,\cdots,2n,
\end{equation}
where $g_{\alpha\beta}:=g(\partial_{\alpha}, \partial_{\beta}).$

We  define an endomorphism $\Phi=(\Phi_{\beta}^{\alpha})$ of $TM$ by
\[
\Phi_{\beta}^{\alpha}:=g^{\alpha\gamma}(\hat{\nabla}_{\gamma\beta}^{2}u-S_{\gamma\beta})
\]
for some smooth section $S$ on $T^{*}M\otimes T^{*}M$ such that
\[
\lambda_{1}(\Phi)\leq \lambda_{1}(\hat{\nabla}^{2}u)\qquad \textrm{in} ~\Omega',
\]
with the equality only at $x_{0}$, but also $\lambda_{1}(\Phi)\in C^{2}(\Omega)$ (cf. \cite{CTW,Gab}).  For any  $\beta$, let $V_{\beta}$ be eigenvector of  $\Phi$ with an eigenvalue $\lambda_{\beta}$.
The proof needs the following derivatives of $\lambda_{1}$, which can be found in \cite{CTW,Gab,Spr}:
\begin{lemma}\label{the derivative of eigenvalues}
	At $x_{0}$, we have that
	\begin{equation}
		\begin{split}
			\frac{\partial \lambda_{1}}{\partial \Phi^{\alpha}_{\beta} }&=V_{1}^{\alpha}V_{1}^{\beta};\\
			\frac{\partial^{2} \lambda_{1}}{\partial \Phi^{\alpha}_{\beta}\partial \Phi^{\gamma}_{\delta}}&
			=\sum_{\kappa>1}\frac{1}{\lambda_{1}-\lambda_{\kappa}}\big(V_{1}^{\alpha}V_{\kappa}^{\beta}V_{\kappa}^{\gamma}V_{1}^{\delta}
			+V_{\kappa}^{\alpha}V_{1}^{\beta}V_{1}^{\gamma}V_{\kappa}^{\delta}\big).
		\end{split}
	\end{equation}
\end{lemma}

We will prove (\ref{4.1}) by applying the maximum principle to the quantity
\[
Q:=\log\lambda_{1}(\Phi)+\phi(|\partial u|^{2})+\phi(\varpi).
\]
Clearly, $Q$ attains its maximum at $x_{0}$. Thus, at $x_{0}$, 
\begin{equation}\label{5..10}
	\frac{1}{\lambda_{1}}e_{i}(\lambda_{1})=-\phi'e_{i}(|\partial u|^{2})-Be^{B\varpi}\varpi_{i}, \qquad \text{for all} \ 1\leq i\leq n;
\end{equation}
\begin{equation}\label{}
	\begin{split}
		0\geq L(Q)=& \frac{L(\lambda_{1})}{\lambda_{1}}- u^{i\bar{i}}\frac{|e_{i}(\lambda_{1})|^{2}}{\lambda_{1}^{2}}  +\phi'' u^{i\bar{i}}|e_{i}(|\partial u|^{2})|^{2} \\
		&+\phi'L(|\partial u|^{2})+Be^{B\varpi}L(\varpi)+B^{2}e^{B\varpi} u^{i\bar{i}}|\varpi_{i}|^{2}.
	\end{split}
\end{equation}
For the rest of this section we may assume that $\sum_{i=1}^{n}u_{i\bar{i}}\geq N$ for the constant $N$  in Lemma \ref{Lma2.1} (otherwise we are done).
\subsection{Lower bound of $L(Q)$}
\begin{proposition}\label{Lma4.2}
	For each $\ve\in (0,\frac{1}{2}]$, at $x_{0}$, we have that
	\begin{equation}\label{4.7"}
		\begin{split}
			0\geq &L(Q)\\ \geq & (2-\ve)\sum_{\alpha>1}u^{i\bar{i}}\frac{|e_{i}(u_{V_{\alpha}V_{1}})|^{2}}{\lambda_{1}(\lambda_{1}-\lambda_{\alpha})}
			+\frac{1}{\lambda_{1}}u^{i\bar{i}}u^{k\bar{k}}|V_{1}(u_{i\bar{k}})|^{2}\\&
			-(1+\ve)u^{i\bar{i}}\frac{|e_{i}(\lambda_{1})|^{2}}{\lambda_{1}^{2}}
			-\frac{C}{\ve}\mathcal{U}
			+ \frac{\phi'}{2}\sum_{j=1}^{n} u^{i\bar{i}}(|e_{i}e_{j}u|^{2}+|e_{i}\bar{e}_{j}u|^{2})\\&+\phi'' u^{i\bar{i}}|e_{i}(|\partial u|^{2})|^{2}+Be^{B\varpi}L(\varpi)+B^{2}e^{B\varpi} u^{i\bar{i}}|\varpi_{i}|^{2}.\\
		\end{split}
	\end{equation}
\end{proposition}
\begin{proof}
	First, we calculate $L(\lambda_{1})$. Let $$u_{ij}=e_{i}e_{j}u-(\hat{\nabla}_{e_{i}}e_{j})u, 
	\qquad u_{V_{i}V_{j}}=u_{kl}V^{k}_{i}V^{l}_{j}.$$ 
	By Lemma \ref{the derivative of eigenvalues} and (\ref{first derivative of background metric}), we can infer that
	\begin{equation}\label{5..13}
		\begin{split}
			L(\lambda_{1})=&u^{i\bar{i}}\frac{\partial^{2} \lambda_{1}}{\partial \Phi^{\alpha}_{\beta}\partial \Phi^{\gamma}_{\delta}}
			e_{i}(\Phi^{\gamma}_{\delta})\bar{e}_{i}(\Phi^{\alpha}_{\beta})+u^{i\bar{i}}\frac{\partial \lambda_{1}}{\partial
				\Phi^{\alpha}_{\beta}}(e_{i}\bar{e}_{i}-[e_{i},\bar{e}_{i}]^{0,1})(\Phi^{\alpha}_{\beta})\\
			=&u^{i\bar{i}}\frac{\partial^{2} \lambda_{1}}{\partial \Phi^{\alpha}_{\beta}\partial \Phi^{\gamma}_{\delta}}
			e_{i}(u_{\gamma\delta})\bar{e}_{i}(u_{\alpha\beta})+u^{i\bar{i}}\frac{\partial \lambda_{1}}{\partial
				\Phi^{\alpha}_{\beta}}(e_{i}\bar{e}_{i}-[e_{i},\bar{e}_{i}]^{0,1})(u_{\alpha\beta})
			+u^{i\bar{i}}\frac{\partial \lambda_{1}}{\partial
				\Phi^{\alpha}_{\beta}}u_{\gamma\beta}e_{i}\bar{e}_{i}(g^{\alpha\gamma})\\
			\geq & 2\sum_{\alpha>1}u^{i\bar{i}}\frac{|e_{i}(u_{V_{\alpha}V_{1}})|^{2}}{\lambda_{1}-\lambda_{\alpha}}
			+ u^{i\bar{i}}(e_{i}\bar{e}_{i}-[e_{i},\bar{e}_{i}]^{0,1})(u_{V_{1}V_{1}})-C\lambda_{1}\mathcal{U}.\\
		\end{split}
	\end{equation}
	Applying $V_{1}$ to Eq. (\ref{2.1}) twice, 
	\begin{equation}\label{5..15}
		u^{i\bar{i}}V_{1}V_{1}(u_{i\bar{i}})= u^{i\bar{i}}u^{k\bar{k}}|V_{1}(u_{i\bar{k}})|^{2}+V_{1}V_{1}(h).
	\end{equation}
	\begin{lemma} If $\lambda_{1}\gg 1$, then
		\begin{equation}\label{claim2}
			\begin{split}
				&u^{i\bar{i}}(e_{i}\bar{e}_{i}-[e_{i},\bar{e}_{i}]^{0,1})(u_{V_{1}V_{1}})\\
				\geq &   u^{i\bar{i}}u^{k\bar{k}}|V_{1}(u_{i\bar{k}})|^{2}-C\lambda_{1}\mathcal{U}-2 u^{i\bar{i}}\{[V_{1},\bar{e}_{i}]V_{1}e_{i}(u)+[V_{1},e_{i}]V_{1}\bar{e}_{i}(u)\}. \\
			\end{split}
		\end{equation}
	\end{lemma}
	\begin{proof}
		By a direct calculation,
		\[
		\begin{split}
			& u^{i\bar{i}}(e_{i}\bar{e}_{i}-[e_{i},\bar{e}_{i}]^{0,1})(u_{V_{1}V_{1}})\\
			=& u^{i\bar{i}}e_{i}\bar{e}_{i}(V_{1}V_{1}(u)-(\hat{\nabla}_{V_{1}}V_{1})u)
			-u^{i\bar{i}}[e_{i},\bar{e}_{i}]^{0,1}(V_{1}V_{1}(u)-(\hat{\nabla}_{V_{1}}V_{1})u)\\
			\geq& u^{i\bar{i}}V_{1}V_{1}(e_{i}\bar{e}_{i}(u)-[e_{i},\bar{e}_{i}]^{0,1}(u))
			-2u^{i\bar{i}}\{[V_{1},\bar{e}_{i}]V_{1}e_{i}(u)+[V_{1},e_{i}]V_{1}\bar{e}_{i}(u)\}\\
			&-u^{i\bar{i}}(\hat{\nabla}_{V_{1}}V_{1})e_{i}\bar{e}_{i}(u)
			+u^{i\bar{i}}(\hat{\nabla}_{V_{1}}V_{1})[e_{i},\bar{e}_{i}]^{0,1}(u)
			-C\lambda_{1}\mathcal{U}\\
			\geq&u^{i\bar{i}}V_{1}V_{1}(u_{i\bar{i}})
			-2u^{i\bar{i}}\{[V_{1},\bar{e}_{i}]V_{1}e_{i}(u)+[V_{1},e_{i}]V_{1}\bar{e}_{i}(u)\}\\
			& +(\hat{\nabla}_{V_{1}}V_{1})(h)-C\lambda_{1}\mathcal{U}.\\
		\end{split}
		\]
		Then the lemma follows from (\ref{5..15}) if $\lambda_{1}\gg 1$.
	\end{proof}
	It follows from   (\ref{5..13}) and (\ref{claim2})  that
	\begin{equation}
		\begin{split}
			L(\lambda_{1})\geq & 2\sum_{\alpha>1}u^{i\bar{i}}\frac{|e_{i}(u_{V_{\alpha}V_{1}})|^{2}}{\lambda_{1}-\lambda_{\alpha}}
			+u^{i\bar{i}}u^{k\bar{k}}|V_{1}(u_{i\bar{k}})|^{2}
			\\&-2u^{i\bar{i}}\textrm{Re}\big([V_{1},e_{i}]V\bar{e}_{i}(u)
			+[V_{1},\bar{e}_{i}]Ve_{i}(u)\big)-C\lambda_{1}\mathcal{U}.
		\end{split}
	\end{equation}
	By (\ref{3.7}), we have that
	\begin{equation}
		L(|\partial u|^{2})\geq  \frac{1}{2}\sum_{j=1}^{n} u^{i\bar{i}}(|e_{i}e_{j}u|^{2}+|e_{i}\bar{e}_{j}u|^{2})- C\mathcal{U}.
	\end{equation}
	Thus,
	\begin{equation}\label{4.10-1}
		\begin{split}
			L(Q)
			\geq & 2\sum_{\alpha>1}u^{i\bar{i}}\frac{|e_{i}(u_{V_{\alpha}V_{1}})|^{2}}{\lambda_{1}(\lambda_{1}-\lambda_{\alpha})}
			+\frac{1}{\lambda_{1}}u^{i\bar{i}}u^{k\bar{k}}|V_{1}(u_{i\bar{k}})|^{2}+B^{2}e^{B\varpi} u^{i\bar{i}}|\varpi_{i}|^{2}
			\\&+Be^{B\varpi}L(\varpi)-2 u^{i\bar{i}}\frac{\textrm{Re}\big([V_{1},e_{i}]V_{1}\bar{e}_{i}(u)+[V_{1},\bar{e}_{i}]V_{1}e_{i}(u)\big)}{\lambda_{1}}
			-C\mathcal{U}\\
			&-u^{i\bar{i}}\frac{|e_{i}(\lambda_{1})|^{2}}{\lambda_{1}^{2}}
			+ \frac{\phi'}{2}\sum_{j=1}^{n} u^{i\bar{i}}(|e_{i}e_{j}u|^{2}+|e_{i}\bar{e}_{j}u|^{2})+\phi'' u^{i\bar{i}}|e_{i}(|\partial u|^{2})|^{2}.\\
		\end{split}
	\end{equation}
	\begin{lemma} For each $0<\ve\leq 1/2$, we have that
		\begin{equation}\label{4.10}
			\begin{split}
				& 2 u^{i\bar{i}}\frac{\textrm{Re}\big([V_{1},e_{i}]V_{1}\bar{e}_{i}(u)+[V_{1},\bar{e}_{i}]V_{1}e_{i}(u)\big)}{\lambda_{1}}\\
				\leq & \ve u^{i\bar{i}}\frac{|e_{i}(\lambda_{1})|^{2}}{\lambda_{1}^{2}}+
				\ve\sum_{\alpha>1}u^{i\bar{i}}\frac{|e_{i}(u_{V_{\alpha}V_{1}})|^{2}}{\lambda_{1}(\lambda_{1}-\lambda_{\alpha})}
				+\frac{C}{\ve}\mathcal{U}.
			\end{split}
		\end{equation}
	\end{lemma}
	\begin{proof}
		Assume that 
		\[
		[V_{1},e_{i}]=\sum_{\beta=1}^{2n} \mu_{i\beta}V_{\beta},\qquad [V_{1},\bar{e}_{i}]=\sum_{\beta=1}^{2n} \overline{\mu_{i\beta}}V_{\beta},
		\]
		where $\mu_{i\beta}\in\mathbb{C}$ are uniformly bounded constants.
		Then,
		\begin{equation}\label{three dv}
			\textrm{Re}\big([V_{1},e_{i}]V_{1}\bar{e}_{i}(u)+[V_{1},\bar{e}_{i}]V_{1}e_{i}(u)\big)\leq C\sum_{\beta=1}^{2n}|V_{\beta}V_{1}e_{i}(u)|.
		\end{equation}
		This reduces to estimate $\frac{1}{\lambda_{1}}\underset{\beta}{\sum} u^{i\bar{i}}|V_{\beta}V_{1}e_{i}(u)|$.
		Recalling the definition of Lie bracket $e_{i}e_{j}-e_{j}e_{i}=[e_{i},e_{j}]$, we have that
		\[
		\begin{split}
			\big|V_{\beta}V_{1}e_{i}(u)\big|= &\big|e_{i}V_{\beta}V_{1}(u)+V_{\beta}[V_{1},e_{i}](u)+[V_{\beta},e_{i}]V_{1}(u)\big|  \\
			=& \big|e_{i}(u_{V_{\beta}V_{1}})+e_{i}(\nabla_{V_{\beta}}V_{1})(u)+V_{\beta}[V_{1},e_{i}](u)
			+[V_{\beta},e_{i}]V_{1}(u)\big|\\
			\leq &\big|e_{i}(u_{V_{\beta}V_{1}})\big|+C\lambda_{1}.
		\end{split}
		\]
		Therefore,
		\begin{equation}\label{third order derivative 1}
			\begin{split}
				\sum_{\beta=1}^{2n} u^{i\bar{i}}\frac{|V_{\beta}V_{1}e_{i}(u)|}{\lambda_{1}}
				\leq &  \sum_{\beta=1}^{2n} u^{i\bar{i}}\frac{|e_{i}(u_{V_{\beta}V_{1}})|}{\lambda_{1}}+C\mathcal{U}\\
				= & u^{i\bar{i}}\frac{|e_{i}(\lambda_{1})|}{\lambda_{1}}+\sum_{\beta>1} u^{i\bar{i}}\frac{|e_{i}(u_{V_{\beta}V_{1}})|}{\lambda_{1}}+C\mathcal{U}.\\
			\end{split}
		\end{equation}
		For each $\ve\in (0,\frac{1}{2}]$, we deduce that
		\begin{equation}
			u^{i\bar{i}}\frac{|e_{i}(\lambda_{1})|}{\lambda_{1}}\leq \ve u^{i\bar{i}}\frac{|e_{i}(\lambda_{1})|^{2}}{\lambda_{1}^{2}}+\frac{C}{\ve}\mathcal{U};
		\end{equation}
		\begin{equation}
			\begin{split}
				\sum_{\beta>1} u^{i\bar{i}}\frac{|e_{i}(u_{V_{\beta}V_{1}})|}{\lambda_{1}}\leq & \ve\sum_{\beta>1} u^{i\bar{i}}\frac{|e_{i}(u_{V_{\beta}V_{1}})|^{2}}{\lambda_{1}(\lambda_{1}-\lambda_{\beta})}
				+\sum_{\beta>1}\frac{\lambda_{1}-\lambda_{\beta}}{\ve\lambda_{1}}\mathcal{U} \\
				\leq & \ve\sum_{\beta>1} u^{i\bar{i}}\frac{|e_{i}(u_{V_{\beta}V_{1}})|^{2}}{\lambda_{1}(\lambda_{1}-\lambda_{\beta})}
				+\frac{C}{\ve}\mathcal{U},
			\end{split}	
		\end{equation}
		where in the last inequality we have used is
		$\sum_{\beta=1}^{2n}\lambda_{\beta}=\Delta u=\Delta^{\mathbb{C}}u+T(du)\geq -C$; see \cite{CTW}.
		Here  $T$ is the torsion vector field of $(g, J)$ \cite[p. 1070]{TV07}.
		It follows from the above three inequalities that
		\[
		\sum_{\beta=1}^{2n} u^{i\bar{i}}\frac{|V_{\beta}V_{1}e_{i}(u)|}{\lambda_{1}}\leq
		\ve u^{i\bar{i}}\frac{|e_{i}(\lambda_{1})|^{2}}{\lambda_{1}^{2}}+
		\ve\sum_{\beta>1} u^{i\bar{i}}\frac{|e_{i}(u_{V_{\beta}V_{1}})|^{2}}{\lambda_{1}(\lambda_{1}-\lambda_{\beta})}
		+\frac{C}{\ve}\mathcal{U}.
		\]
		Then, by (\ref{three dv}), we obtain (\ref{4.10}).
	\end{proof}
	Consequently, Proposition \ref{Lma4.2} follows from (\ref{4.10-1})-(\ref{4.10}). 
\end{proof}
\subsection{Proof of Theorem \ref{Thm4.1}} We divide the proof  into three cases.

~\\
\textbf{Case 1.} At $x_{0}$, 
\begin{equation}\label{4.12}
	u^{n\bar{n}}\leq B^{3}e^{2B\varpi}u^{1\bar{1}}.
\end{equation}
\textbf{Case 2.}  At $x_{0}$,  \begin{equation}\label{5.15}
	\frac{\phi'}{4}\sum_{j=1}^{n} u^{i\bar{i}}(|e_{i}e_{j}u|^{2}+|e_{i}\bar{e}_{j}u|^{2})>6\sup_{\bar{\Omega}}(|\partial\varpi|^{2})B^{2}e^{2B\varpi}\mathcal{U}.\end{equation}
In both cases, we choose $\ve=\frac{1}{2}$.  Using  $|a+b|^{2}\leq 4|a|^{2}+\frac{4}{3}|b|^{2}$ for (\ref{5..10}), 
\begin{equation*}
	-(1+\ve)u^{i\bar{i}}\frac{|e_{i}(\lambda_{1})|^{2}}{\lambda_{1}^{2}}\geq
	-6\sup_{\bar{\Omega}}(|\partial \varpi|^{2})B^{2}e^{2B\varpi}\mathcal{U}-2(\phi')^{2} u^{i\bar{i}}|e_{i}(|\partial u|^{2})|^{2}.
\end{equation*}
Substituting this into (\ref{4.7"}),
\begin{equation*}
	\begin{split}
		0 \geq & (2-\ve)\sum_{\alpha>1}u^{i\bar{i}}\frac{|e_{i}(u_{V_{\alpha}V_{1}})|^{2}}{\lambda_{1}(\lambda_{1}-\lambda_{\alpha})}
		+\frac{1}{\lambda_{1}}u^{i\bar{i}}u^{j\bar{j}}|V_{1}(u_{i\bar{j}})|^{2}
		\\&-\Big(\frac{C}{\ve}+6\sup_{\bar{\Omega}}(|\partial \varpi|^{2})B^{2}e^{2B\varpi}\Big)\mathcal{U}
		+ \frac{\phi'}{2}\sum_{j=1}^{n} u^{i\bar{i}}(|e_{i}e_{j}u|^{2}+|e_{i}\bar{e}_{j}u|^{2})\\&
		+Be^{B\varpi}L(\varpi)+B^{2}e^{B\varpi} u^{i\bar{i}}|\varpi_{i}|^{2}-C.\\
	\end{split}
\end{equation*}
~\\
\textbf{Proof of Case 1} Since $L(\varpi)$ is uniformly bounded from below, it follows from the concavity of $L$ that
\begin{equation}\label{z}
	0\geq \frac{\phi'}{2}\sum_{j=1}^{n} u^{i\bar{i}}(|e_{i}e_{j}u|^{2}+|e_{i}\bar{e}_{j}u|^{2})-C_{B}\mathcal{U}.
\end{equation}\footnote{In what follows, $C_{B}$ are positive constants depending on $B$.}
Notice that $\{u^{i\bar{i}}\}$ are pairwisely comparable, by (\ref{4.12}), so
\[
\sum_{i,j}(|e_{i}e_{j}u|^{2}+|e_{i}\bar{e}_{j}u|^{2})\leq C_{B}K.
\]
Thus the complex covariant derivatives
\[
u_{ij}=e_{i}e_{j}u-(\hat{\nabla}_{e_{i}}e_{j})u; \ u_{i\bar{j}}=e_{i}\bar{e}_{j}u-(\hat{\nabla}_{e_{i}}\bar{e}_{j})u
\]
satisfy
\[
\sum_{i,j}(|u_{ij}|^{2}+|u_{i\bar{j}}|^{2})\leq C_{B}K,
\]
and this proves (\ref{4.1}).
\qed
~\\
\textbf{Proof of Case 2} It follows from (\ref{4.7"}) and (\ref{5.15}) that
\begin{equation}\label{4.19}
	\begin{split}
		0 \geq \frac{\phi'}{4}\sum_{j=1}^{n}u^{i\bar{i}}(|e_{i}e_{j}u|^{2}+|e_{i}\bar{e}_{j}u|^{2})
		-\frac{C}{\ve}\mathcal{U}
		+Be^{B\varpi}L(\varpi).
	\end{split}
\end{equation}
Using the fact that
$L(\varpi)\geq  \theta(1+\mathcal{U})$ (by (\ref{2.2})), we have that
\[
0  \geq \frac{\phi'}{4}\sum_{j=1}^{n}u^{i\bar{i}}(|e_{i}e_{j}u|^{2}+|e_{i}\bar{e}_{j}u|^{2})+\Big(\frac{1}{2}\theta Be^{B\varpi}-\frac{C}{\ve}\Big)\mathcal{U}+\frac{1}{2}\theta Be^{B\varpi},
\]
which yields a contradiction if we further assume that $B$ is large enough. \qed

~\\
\textbf{Case 3.} If the Cases 1 and 2 do not hold, we define 
\[
I:=\Big\{1\leq i\leq n:~ u^{n\bar{n}}(x_0)\geq B^{3}e^{2B\varpi(0)}u^{i\bar{i}}(x_0)\Big\}.
\]~\\
Clearly,  $1\in I, n\not\in I$. Hence, we may let $I=\{1,2,\cdots, p\}$ for a certain $p<n$. 
\begin{lemma}
	Assume that $B\geq 6n\sup_{\bar{\Omega}}(|\partial \varpi|^{2})$. At $x_{0}$, we have
	\begin{equation}
		-(1+\ve)\sum_{i\in I}u^{i\bar{i}} \frac{|e_{i}(\lambda_{1})|^{2}}{\lambda_{1}^{2}}\geq -\mathcal{U}-2(\phi')^{2}\sum_{i\in I}u^{i\bar{i}}|e_{i}(|\partial u|^{2})|.
	\end{equation}	
\end{lemma}
\begin{proof}
	It follows from (\ref{5..10}) and the inequality $|a+b|^{2}\leq 4|a|^{2}+\frac{4}{3}|b|^{2}$ that
	\[
	\begin{split}
		&-(1+\ve)\sum_{i\in I} u^{i\bar{i}}\frac{|e_{i}(\lambda_{1})|^{2}}{\lambda_{1}^{2}}\\
		=&-\frac{3}{2}\sum_{i\in I} u^{i\bar{i}}|\phi'e_{i}(|\partial u|^{2})+Ae^{A\varpi}\varpi_{i}|^{2}\\
		\geq & -6\sup_{\bar{\Omega}}(|\partial \varpi|^{2})B^{2}e^{2B\varpi}\sum_{i\in I} u^{i\bar{i}}-2(\phi')^{2}\sum_{i\in I} u^{i\bar{i}}|e_{i}(|\partial u|^{2})|^{2}\\
		\geq & -6n\sup_{\bar{\Omega}}(|\partial \varpi|^{2})B^{-1} u^{n\bar{n}}-2(\phi')^{2}\sum_{i\in I} u^{i\bar{i}}|e_{i}(|\partial u|^{2})|^{2}\\
		\geq &-\mathcal{U}-2(\phi')^{2}\sum_{i\in I}u^{i\bar{i}}|e_{i}(|\partial u|^{2})|^{2},
	\end{split}
	\]
	where we used  $B\geq 6n\sup_{\bar{\Omega}}(|\partial \varpi|^{2})$ in the last inequality.
\end{proof}
Let us define a new (1,0) vector field by
\[
\tilde{e}_{1}:=\frac{1}{\sqrt{2}}(V_{1}-\sqrt{-1}JV_{1}).
\]
At $x_{0}$, there exist $\varsigma_{1},\cdots, \varsigma_{n}\in \mathbb{C}$ such that
\[
\tilde{e}_{1}=\sum_{k=1}^{n}\varsigma_{k}e_{k},\qquad \sum_{k=1}^{n}|\varsigma_{k}|^{2}=1.
\]
\begin{lemma}\label{coeff} At $x_{0}$,
	$
	|\varsigma_{k}|\leq \frac{C_{B}}{\lambda_{1}} ~\textrm{for all} ~k\not\in I.
	$
\end{lemma}
\begin{proof}
	The proof is from \cite{CTW}; we include it here for the convenience of the reader. Now  we have
	\[
	\frac{\phi'}{4}\sum_{i\not\in I}\sum_{j=1}^{n} u^{i\bar{i}}(|e_{i}e_{j}u|^{2}+|e_{i}\bar{e}_{j}u|^{2})\leq 6n^{2}\sup_{\bar{\Omega}}(|\partial \varpi|^{2})B^{2}e^{2B\varpi}u^{n\bar{n}}.
	\]
	When $ u^{n\bar{n}}\leq B^{3}e^{2B\varpi}u^{i\bar{i}}$ for each $i\not\in I$,
	it follows that
	\[
	\sum_{\alpha=2p+1}^{2n}\sum_{\beta=1}^{2n}|\hat{\nabla}^{2}_{\alpha\beta}u|\leq C_{B},
	\]
	which in turn implies that $|\Phi_{\beta}^{\alpha}|\leq C_{B}$ for $2p+1\leq\alpha\leq 2n$, $1\leq \beta\leq 2n$. Since $\Phi(V_{1})=\lambda_{1}V_{1}$, 
	\[
	|V_{1}^{\alpha}|=|\frac{1}{\lambda_{1}}(\Phi(V_{1}))^{\alpha}|=\frac{1}{\lambda_{1}}|
	\sum_{\beta=1}^{2n}\Phi_{\beta}^{\alpha}V_{1}^{\beta}|\leq \frac{C_{B}}{\lambda_{1}}.
	\]
	This proves the lemma.
\end{proof}
Now we estimate the first three terms in Proposition \ref{Lma4.2}. Since $JV_{1}$ is $g$-unitary and  $g$-orthogonal to $V_{1}$, there exist $\mu_{2},\cdots,\mu_{2n}\in \mathbb{R}$ such that
\[
JV_{1}=\sum_{\alpha>1}\mu_{\alpha}V_{\alpha}, \qquad \sum_{\alpha>1}\mu_{\alpha}^{2}=1~ \textrm{at} \ x_{0}.
\]
\begin{lemma}\label{concav}
	At $x_{0}$, for any constant $\gamma>0$, 
	\[
	\begin{split}
		& (2-\ve)\sum_{\alpha>1}u^{i\bar{i}}\frac{|e_{i}(u_{V_{\alpha}V_{1}})|^{2}}{\lambda_{1}(\lambda_{1}-\lambda_{\alpha})}
		+\frac{1}{\lambda_{1}} u^{i\bar{i}}u^{k\bar{k}}|V_{1}(u_{i\bar{k}})|^{2}
		-(1+\ve)\sum_{i\not\in I}u^{i\bar{i}} \frac{|e_{i}(\lambda_{1})|^{2}}{\lambda_{1}^{2}}\\
		\geq &(2-\ve
		)\sum_{i\not\in I}\sum_{\alpha>1}u^{i\bar{i}} \frac{|e_{i}(u_{V_{\alpha}V_{1}})|^{2}}{\lambda_{1}(\lambda_{1}-\lambda_{\alpha})}
		+2\sum_{k\in I,i\not\in I} u^{i\bar{i}}u^{k\bar{k}}\frac{|V_{1}(u_{i\bar{k}})|^{2}}{\lambda_{1}}\\
		&-3\ve\sum_{i\not\in I}u^{i\bar{i}}\frac{|e_{i}(\lambda_{1})|^{2}}{\lambda_{1}^{2}}-
		2(1-\ve)(1+{\gamma})u_{\tilde{1}\bar{\tilde{1}}}\sum_{k\in I,i\not\in I}u^{i\bar{i}}u^{k\bar{k}}\frac{|V_{1}(u_{i\bar{k}})|^{2}}{\lambda_{1}^{2}}\\
		&-\frac{C}{\ve}\mathcal{U}-(1-\ve)(1+\frac{1}{\gamma})(\lambda_{1}-\sum_{\alpha>1}\lambda_{\alpha}
		\mu_{\alpha}^{2})
		\sum_{i\not\in I}\sum_{\alpha>1}u^{i\bar{i}}
		\frac{|e_{i}(u_{V_{\alpha}V_{1}})|^{2}}{\lambda_{1}^{2}(\lambda_{1}-\lambda_{\alpha})},\\
	\end{split}
	\]
	if we assume that $\lambda_{1}\geq \frac{C_{B}}{\ve}$, where $u_{\tilde{1}\bar{\tilde{1}}}:=\sum_{i=1}^{n} u_{i\bar{i}}|\varsigma_{i}|^{2}$.
\end{lemma}
\begin{proof} We divide the proof into three steps.
	
	\textrm{Step 1.}  Since $\bar{\tilde{e}}_{1}=\frac{1}{\sqrt{2}}(V_{1}+\sqrt{-1}JV_{1})$,
	\[
	e_{i}(u_{V_{1}V_{1}})=\sqrt{2} e_{i}(u_{V_{1}\bar{\tilde{e}}_{1}})-\sqrt{-1} e_{i}(u_{V_{1}JV_{1}}).
	\]
We have the first term is
	\begin{equation*}\label{4.27}
		\begin{split}
			e_{i}(u_{V_{1}\bar{\bar{e}}_{1}})= & e_{i}(V_{1}\bar{\tilde{e}}_{1}u-(\hat{\nabla}_{V_{1}}\bar{\bar{e}}_{1})u)
			=\bar{\bar{e}}_{1}e_{i}V_{1}u+O(\lambda_{1})   \\
			= & \sum_{k}\overline{\varsigma_{k}}V_{1}(u_{i\bar{k}})+O(\lambda_{1}),
		\end{split}
	\end{equation*}
	where $O(\lambda_{1})$ are those terms which can be controlled by $\lambda_{1}$. The second term is
	\begin{equation*}\label{4.28}
		\begin{split}
			e_{i}(u_{V_{1}JV_{1}})=&e_{i}{V_{1}JV_{1}}(u)+O(\lambda_{1})=JV_{1}e_{i}{V_{1}}(u)+O(\lambda_{1})  \\
			=&\sum_{\alpha>1}V_{\alpha}e_{i}{V_{1}}(u)+O(\lambda_{1})=\sum_{\alpha>1}e_{i}(u_{V_{\alpha}{V_{1}}})+O(\lambda_{1}).\\
		\end{split}
	\end{equation*}
	Thus, 
	\begin{equation}\label{4.26}
		e_{i}(\lambda_{1})=\sqrt{2}\sum_{k} \overline{\varsigma_{k}}V_{1}(u_{i\bar{k}})-\sqrt{-1}\sum_{\alpha>1}\mu_{\alpha}e_{i}(u_{V_{1}
			V_{\alpha}})+O(\lambda_{1}).\end{equation}
	
	Step 2. It follows from (\ref{4.26}) and Lemma \ref{coeff} that
	\begin{equation}\label{4.26coeff}
		\begin{split}
			&-(1+\ve)\sum_{i\not\in I} u^{i\bar{i}}\frac{|e_{i}(\lambda_{1})|^{2}}{\lambda_{1}^{2}}\\
			\geq& 
			-(1-\ve)\sum_{i\not\in I} u^{i\bar{i}}\frac{|\sqrt{2}\sum_{k\in I} \overline{\varsigma_{k}}V_{1}(u_{i\bar{k}})-\sqrt{-1}\sum_{\alpha>1}\mu_{\alpha}e_{i}(u_{V_{1}
					V_{\alpha}})|^{2}}{\lambda_{1}^{2}}\\
			&-3\ve \sum_{i\not\in I} u^{i\bar{i}}\frac{|e_{i}(\lambda_{1})|^{2}}{\lambda_{1}^{2}}-\frac{C_{B}}{\ve}\sum_{i\not\in I,k\not\in I}u^{i\bar{i}}\frac{|V_{1}(u_{i\bar{k}})|^{2}}{\lambda_{1}^{4}}-\frac{C}{\ve}\mathcal{U}.
		\end{split}
	\end{equation}
	By the Cauchy-Schwarz inequality,
	\begin{alignat}{2} 
		& \Big|\sum_{\alpha>1}\mu_{\alpha}e_{i}(u_{V_{1}V_{\alpha}})\Big|^{2}
		& & \leq \sum_{\alpha>1}(\lambda_{1}-\lambda_{\alpha}\mu_{\alpha}^{2})
		\sum_{\beta>1}\frac{|e_{i}(u_{V_{1}V_{\beta}})|^{2}}{\lambda_{1}-\lambda_{\beta}};\\
		\label{4..25}&\Big|\sum_{k\in I}\overline{\varsigma_{k}}V_{1}(u_{i\bar{k}})\Big|^{2}
		& &\leq u_{\tilde{1}\bar{\tilde{1}}}\sum_{k\in I}u^{k\bar{k}}|V_{1}(u_{i\bar{k}})|^{2}.
	\end{alignat}
	With these, for each $\gamma>0$, 
	\begin{equation}\label{with these}
		\begin{split}
			& (1-\ve)\sum_{i\not\in I} u^{i\bar{i}}\frac{|\sqrt{2}\sum_{k\in I} \overline{\varsigma_{k}}V_{1}(u_{i\bar{k}})-\sqrt{-1}\sum_{\alpha>1}\mu_{\alpha}e_{i}(u_{V_{1}
					V_{\alpha}})|^{2}}{\lambda_{1}^{2}}\\
			\leq & 2(1-\ve)(1+\gamma)\sum_{i\not\in I} u^{i\bar{i}}\frac{|\sum_{k\in I} \overline{\varsigma_{k}}V_{1}(u_{i\bar{k}})|^{2}}{\lambda_{1}^{2}}\\
			&+(1-\ve)(1+\frac{1}{\gamma})\sum_{i\not\in I} u^{i\bar{i}}\frac{|\sum_{\alpha>1}\mu_{\alpha}e_{i}(u_{V_{1}
					V_{\alpha}})|^{2}}{\lambda_{1}^{2}}\\
			\leq & 2(1-\ve)(1+\gamma)u_{\tilde{1}\bar{\tilde{1}}}\sum_{i\not\in I,k\in I}u^{i\bar{i}} u^{k\bar{k}}\frac{|V_{1}(u_{i\bar{k}})|^{2}}{\lambda_{1}^{2}}\\
			&+(1-\ve)(1+\frac{1}{\gamma})(\lambda_{1}-\sum_{\alpha>1} \lambda_{\alpha}\mu_{\alpha}^{2})\sum_{i\not\in I}\sum_{\alpha>1} u^{i\bar{i}}\frac{|e_{i}(u_{V_{\alpha}V_{1}})|^{2}}{\lambda_{1}^{2}(\lambda_{1}-\lambda_{\alpha})}.
		\end{split}
	\end{equation}
	
	Step 3.
	If $\lambda_{1}\geq \frac{C_{B}}{\ve}$ (by assumption), we know that $u_{1\bar{1}}$ is comparable to $\lambda_{1}$, whence
	$\frac{C_{B}}{\ve\lambda_{1}^{3}}\leq u^{1\bar{1}}\leq u^{k\bar{k}}$
	for all $k$. Thus,
	\begin{equation}\label{5..25}
		\begin{split}
			u^{i\bar{i}}u^{k\bar{k}}|V_{1}(u_{i\bar{k}})|^{2}&\geq 2\sum_{k\in I,i\not\in I}u^{i\bar{i}}u^{k\bar{k}}|V_{1}(u_{i\bar{k}})|^{2}+\frac{C_{B}}{\ve}\sum_{i,k\not\in I}u^{i\bar{i}}\frac{|V_{1}(u_{i\bar{k}})|^{2}}{\lambda_{1}^{3}}.
		\end{split}
	\end{equation}
	Then the lemma follows from (\ref{4.26coeff}), (\ref{with these}) and (\ref{5..25}).
\end{proof}
\begin{lemma}\label{final}
	At $x_{0}$, if $\lambda_{1}\geq \frac{C}{\ve^{3}}$,
	\[
	\begin{split}
		&(2-\ve)\sum_{\alpha>1}u^{i\bar{i}}\frac{|e_{i}(u_{V_{\alpha}V_{1}})|^{2}}
		{\lambda_{1}(\lambda_{1}-\lambda_{\alpha})}+\frac{1}{\lambda_{1}} u^{i\bar{i}}u^{k\bar{k}}|V_{1}(u_{i\bar{k}})|^{2}
		-(1+\ve)\sum_{i\not\in I} u^{i\bar{i}}\frac{|e_{i}(\lambda_{1})|^{2}}{\lambda_{1}^{2}}\\
		\geq& -6\ve B^{2}e^{2B\varpi}\sum_{i=1}^{n} u^{i\bar{i}}|\varpi_{i}|^{2}-6\ve (\phi')^{2}\sum_{i\not\in I}u^{i\bar{i}}|e_{i}(|\partial u|^{2})|^{2}-\frac{C}{\ve}\mathcal{U}.
	\end{split}
	\]
\end{lemma}
\begin{proof} It suffices to prove that
\begin{equation}\label{4.30}
	\begin{split}
		&(2-\ve)\sum_{\alpha>1}u^{i\bar{i}}\frac{|e_{i}(u_{V_{\alpha}V_{1}})|^{2}}
		{\lambda_{1}(\lambda_{1}-\lambda_{\alpha})}+\frac{1}{\lambda_{1}} u^{i\bar{i}}u^{k\bar{k}}|V_{1}(u_{i\bar{k}})|^{2}\\&
		-(1+\ve)\sum_{i\not\in I} u^{i\bar{i}}\frac{|e_{i}(\lambda_{1})|^{2}}{\lambda_{1}^{2}}
		\geq -3\ve \sum_{i\not\in I} u^{i\bar{i}}\frac{|e_{i}(\lambda_{1})|^{2}}{\lambda_{1}^{2}}-\frac{C}{\ve} \mathcal{U}.
	\end{split}
\end{equation}
We divide the proof into two assumptions.

\textbf{Assumption 1:} At $x_{0}$, we assume that
\begin{equation}\label{5.32}
	\lambda_{1}+\sum_{\alpha>1}\lambda_{\alpha}\mu_{\alpha}^{2}\geq 2(1-\ve)u_{\tilde{1}\bar{\tilde{1}}}>0.
\end{equation}
\begin{proof} Taking  this, as well as Lemma \ref{concav}, we get that
	\begin{equation}\label{5.76}
		\begin{split}
			& (2-\ve)\sum_{\alpha>1}u^{i\bar{i}}\frac{|e_{i}(u_{V_{\alpha}V_{1}})|^{2}}{\lambda_{1}(\lambda_{1}-\lambda_{\alpha})}
			+\frac{1}{\lambda_{1}} u^{i\bar{i}}u^{k\bar{k}}|V_{1}(u_{i\bar{k}})|^{2} -(1+\ve)\sum_{i\not\in I} u^{i\bar{i}}\frac{|e_{i}(\lambda_{1})|^{2}}{\lambda_{1}^{2}}\\
			\geq &(2-\ve)\sum_{i\not\in I}\sum_{\alpha>1}u^{i\bar{i}}\frac{|e_{i}(u_{V_{\alpha}V_{1}})|^{2}}{\lambda_{1}(\lambda_{1}-\lambda_{\alpha})}
			+\sum_{k\in I,i\not\in I}\frac{2}{\lambda_{1}} u^{i\bar{i}}u^{k\bar{k}}|V_{1}(u_{i\bar{k}})|^{2}\\
			&-3\ve\sum_{i\not\in I}u^{i\bar{i}}\frac{|e_{i}(\lambda_{1})|^{2}}{\lambda_{1}^{2}}-
			(1+{\gamma})(\lambda_{1}+\sum_{\alpha>1}\lambda_{\alpha}\mu_{\alpha}^{2})\sum_{k\in I,i\not\in I}u^{i\bar{i}}u^{k\bar{k}}\frac{|V_{1}(u_{i\bar{k}})|^{2}}{\lambda_{1}^{2}}\\
			&-\frac{C}{\ve}\mathcal{U}-
			(1-\ve)(1+\frac{1}{\gamma})(\lambda_{1}-\sum_{\alpha>1}\lambda_{\alpha}\mu_{\alpha}^{2})
			\sum_{i\not\in I}\sum_{\alpha>1}u^{i\bar{i}}\frac{|e_{i}(u_{V_{\alpha}V_{1}})|^{2}}{\lambda_{1}^{2}(\lambda_{1}-\lambda_{\alpha})}.
		\end{split}
	\end{equation}
	We only choose that
	$\gamma=\frac{\lambda_{1}-\underset{\alpha>1}{\sum}\lambda_{\alpha}\mu_{\alpha}^{2}}
	{\lambda_{1}+\underset{\alpha>1}{\sum}\lambda_{\alpha}\mu_{\alpha}^{2}}.$
	On the right side of (\ref{5.76}), the first term cancels the last term, and the second term cancels the fourth. This proves  (\ref{4.30}). 
\end{proof}
~\\
\textbf{Assumption 2:}  At $x_{0}$, we assume that
\begin{equation}\label{4.31-1}
	{\lambda_{1}+\sum_{\alpha>1}\lambda_{\alpha}\mu_{\alpha}^{2}}< 2(1-\ve)u_{\tilde{1}\bar{\tilde{1}}}.
\end{equation} 
\begin{proof}
	Computing at $x_{0}$, we get that
	\[
	\begin{split}
		u_{\tilde{1}\bar{\tilde{1}}}=&(\ddbar u)(\widetilde{e}_{1}, \overline{\widetilde{e}_{1}})= \sum_{i=1}^{n} \big\{e_{i}\bar{e}_{i}(u)-[e_{i},\bar{e}_{i}]^{(0,1)}(u)\big\}|\varsigma_{i}|^{2}
		\\
		\leq & \frac{1}{2}\Big\{V_{1}V_{1}(u)+(JV_{1})(JV_{1})(u)+\sqrt{-1}[V_{1},JV_{1}](u)\Big\}- [\widetilde{e}_{1}, \overline{\widetilde{e}_{1}}]^{(0,1)}(u)+C\\
		\leq & \frac{1}{2}\Big\{u_{V_{1}V_{1}}+u_{JV_{1}JV_{1}}+(\hat{\nabla}_{V_{1}}V_{1})(u)+(\hat{\nabla}_{JV_{1}}JV_{1})(u)+\sqrt{-1}[V_{1},JV_{1}](u)\Big\}+C\\
		\leq& \frac{1}{2}(\lambda_{1}+\sum_{\alpha>1}\lambda_{\alpha}\mu_{\alpha}^{2})+C.
	\end{split}
	\]
	It then follows from (\ref{4.31-1}) that
	$
	\lambda_{1}+\sum_{\alpha>1} \lambda_{\alpha}\mu_{\alpha}^{2}\geq -C
	$
	and $u_{\tilde{1}\bar{\tilde{1}}}\leq \frac{C}{\ve}$. Hence,
	$0<\lambda_{1}-\sum_{\alpha>1} \lambda_{\alpha}\mu_{\alpha}^{2}\leq 2\lambda_{1}+C\leq (2+2\ve^{2})\lambda_{1}$,
	provided that $\lambda_{1}\geq \frac{C}{\ve^{2}}$. Choosing $\gamma=\frac{1}{\ve^{2}}$, 
	\[
	\begin{split}
		(1-\ve)(1+\frac{1}{\gamma})(\lambda_{1}-\sum_{\alpha>1}\lambda_{\alpha}\mu_{\alpha}^{2})
		\leq & 2(1-\ve)(1+\ve^{2})^{2}\lambda_{1}
		\leq (2-\ve)\lambda_{1}.
	\end{split}
	\]
	Substituting this into Lemma \ref{concav} yields that
	\[
	\begin{split}
		& (2-\ve)\sum_{\alpha>1}u^{i\bar{i}}\frac{|e_{i}(u_{V_{\alpha}V_{1}})|^{2}}{\lambda_{1}(\lambda_{1}-\lambda_{\alpha})}
		+\frac{1}{\lambda_{1}} u^{i\bar{i}}u^{k\bar{k}}|V_{1}(u_{i\bar{k}})|^{2} -(1+\ve)\sum_{i\not\in I} u^{i\bar{i}}\frac{|e_{i}(\lambda_{1})|^{2}}{\lambda_{1}^{2}}\\
		\geq &2\sum_{k\in I,i\not\in I} u^{i\bar{i}}u^{k\bar{k}}\frac{|V_{1}(u_{i\bar{k}})|^{2}}{\lambda_{1}}-3\ve\sum_{i\not\in I}u^{i\bar{i}}\frac{|e_{i}(\lambda_{1})|^{2}}{\lambda_{1}^{2}}\\
		&-2(1-\ve)(1+\frac{1}{\ve^{2}})u_{\tilde{1}\bar{\tilde{1}}}\sum_{k\in I,i\not\in I}u^{i\bar{i}}u^{k\bar{k}}\frac{|V_{1}(u_{i\bar{k}})|^{2}}{\lambda_{1}^{2}}    -\frac{C}{\ve}\mathcal{U}\\
			\end{split}
	\]
	\[
	\begin{split}
		\geq &2\sum_{k\in I,i\not\in I} u^{i\bar{i}}u^{k\bar{k}}\frac{|V_{1}(u_{i\bar{k}})|^{2}}{\lambda_{1}} -3\ve\sum_{i\not\in I}u^{i\bar{i}}\frac{|e_{i}(\lambda_{1})|^{2}}{\lambda_{1}^{2}}\\
		&-(1-\ve)(1+\frac{1}{\ve^{2}})\frac{C}{\ve}\sum_{k\in I,i\not\in I}u^{i\bar{i}}u^{k\bar{k}}\frac{|V_{1}(u_{i\bar{k}})|^{2}}{\lambda_{1}^{2}}    -\frac{C}{\ve}\mathcal{U}\\
		\geq &-3\ve\sum_{i\not\in I}u^{i\bar{i}}\frac{|e_{i}(\lambda_{1})|^{2}}{\lambda_{1}^{2}}-\frac{C}{\ve}\mathcal{U},\\
	\end{split}
	\]
	where in the last inequality we relied on the fact that $\lambda_{1}\geq \frac{C}{\ve^{3}}$ . This proves  (\ref{4.30}), and hence the proof of the lemma is complete.
\end{proof}
Now  we complete the proof of the interior second order estimate. It follows from Lemma \ref{final} and (\ref{4.7"}) that, at $x_{0}$,
\begin{equation}\label{}
	\begin{split}
		0 \geq & -6\ve B^{2}e^{2B\varpi} u^{i\bar{i}}|\varpi_{i}|^{2}-6\ve (\phi')^{2}\sum_{i\not\in I}u^{i\bar{i}}|e_{i}(|\partial u|^{2})|^{2}-\frac{C}{\ve}\mathcal{U}\\
		&+ \frac{\phi'}{2}\sum_{j=1}^{n} u^{i\bar{i}}(|e_{i}e_{j}u|^{2}+|e_{i}\bar{e}_{j}u|^{2})
		+B^{2}e^{B\varpi} u^{i\bar{i}}|\varpi_{i}|^{2}+Be^{B\varpi}L(\varpi) +\phi''u^{i\bar{i}}|e_{i}(|\partial u|^{2})|^{2}.
	\end{split}
\end{equation}
Choosing $\ve< \frac{1}{6}$ such that $6\ve e^{B\varpi(x_0)}=1$, and by $\phi''=2(\phi')^{2}$,
\[
\begin{split}
	0\geq &-\frac{C}{\ve}\mathcal{U}+ \frac{\phi'}{2}\sum_{j=1}^{n} u^{i\bar{i}}(|e_{i}e_{j}u|^{2}+|e_{i}\bar{e}_{j}u|^{2})+Be^{B\varpi}L(\varpi).
\end{split}
\]
Thus,
$$B\theta e^{B\varpi} +(B\theta-C)e^{B\varpi}\mathcal{U}+ \frac{\phi'}{2}\sum_{j=1}^{n} u^{i\bar{i}}(|e_{i}e_{j}u|^{2}+|e_{i}\bar{e}_{j}u|^{2})\leq 0.$$
We choose $B$ sufficiently large such that $B\theta\geq C$. This then yields a contradiction, and we have completed the proof.\end{proof}
\end{proof}
\begin{remark}
	The  interior $C^{2,\alpha}$ estimates follow from the Evans-Krylov theorem and an extension trick  introduced by Wang \cite{Wan12} in the study of the complex Monge-Amp\`{e}re equation. Then the higher order estimates can be obtained by Schauder estimates.
\end{remark}
\section{Boundary $C^{2}$ estimates}
In this section we shall derive the  estimate
$$\underset{\partial{\Omega}}{\max}|\ddbar u|\leq C$$
for a certain dependent constant $C$.
\subsection{Pure tangential estimates}
Let us fix a point $z\in \partial\Omega$, and define 
$$\rho(x):=\text{dist}_{g}(x,z) \qquad \textrm{in} \ M.$$ 
Since $u-\underline{u}=0$ on $\partial{\Omega}$,  we can write $u=\underline{u}+\rho\sigma$ in a neighborhood of $z$, where $\sigma$ is a function defined  on $\partial\Omega$ which  depends, linearly on the first order derivatives of $u-\underline{u}$. For arbitrary vector fields $X,Y$ which are tangential to $\partial\Omega$, 
$$XY(u)=XY(\underline{u})+XY(\rho)\cdot\sigma.$$
It follows from the $C^{1}$ estimate that
\begin{equation}\label{pure `tangent}
	|XY(u)|(z)\leq C.	
\end{equation} 
Then the pure  tangential estimates follow by the randomicity of $z$.

\subsection{Mixed direction estimates}
\begin{proposition}\label{Mixed direction estimate.}
	Let $N\in T_{z}M$ be orthogonal to $\partial\Omega$ such that $N\rho=-1$, and let $X$ be a vector field  which is tangential to $\partial\Omega$. We have that
	\begin{equation}\label{mix dir}
		|NX(u)|(z)\leq C,
	\end{equation}
	where $C$ depends on $\|u\|_{C^{1}(\bar{\Omega})}$, $h$, $\|\underline{u}\|_{C^{2}}$ and other known data.
\end{proposition}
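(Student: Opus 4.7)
The plan is a standard Caffarelli--Kohn--Nirenberg--Spruck type barrier argument, with the strict subsolution $\underline{u}$ providing the barrier via Lemma~\ref{Lma2.1}. Extend $X$ smoothly into a neighborhood of $z$ in $\bar\Omega$ and set $w:=X(u-\underline{u})$. Since $u-\underline{u}=0$ on $\partial\Omega$ and $X$ is tangential, $w$ vanishes on $\partial\Omega\cap B_r(z)$, so (\ref{mix dir}) reduces to bounding $|Nw(z)|$, the contribution $NX(\underline{u})(z)$ being already controlled by $\underline{u}\in C^{2}(\bar\Omega)$.

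First I would estimate $L(w)$. Differentiating $\log\det(u_{i\bar j})=h$ along $X$ gives $u^{i\bar j}X(u_{i\bar j})=X(h)$. Using the commutator identity
\[
X(u_{i\bar j})-(Xu)_{i\bar j}=e_i[X,\bar e_j]u+[X,e_i]\bar e_j u-[X,[e_i,\bar e_j]^{(0,1)}]u,
\]
each right-hand side term is a smooth bounded coefficient times a first- or second-order derivative of $u$. Pairing with $u^{i\bar j}$, exploiting the trace identity $u^{i\bar j}u_{i\bar j}=n$, and using the Cauchy--Schwarz inequality to absorb the pure $e_ke_\ell u$ and $\bar e_k\bar e_\ell u$ terms, one obtains $|L(Xu)-X(h)|\leq C(1+\mathcal{U})$, and similarly for $L(X\underline{u})$ (using $\underline{u}\in C^{2}(\bar\Omega)$ directly). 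Hence $|L(w)|\leq C_1(1+\mathcal{U})$.

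Next I would construct the barrier
\[
\Psi:=A(u-\underline{u})+C|x-z|^{2}
\]
on $\Omega_r:=\Omega\cap B_r(z)$, where $|x-z|$ denotes distance in a fixed local chart. By Lemma~\ref{Lma2.1}, in the regime $\sum u_{i\bar i}\geq N$ we have $L(u-\underline{u})\leq -\theta(1+\mathcal{U})$; in the complementary regime the eigenvalues of $(u_{i\bar j})$ are pinched by $\prod u_{i\bar i}=e^{h}$ together with (\ref{tau}), so $L$ is uniformly elliptic with bounded coefficients and the estimate follows directly by adjusting constants. Since $|x-z|^{2}$ is smooth with bounded complex Hessian, $|L(|x-z|^{2})|\leq C_2\mathcal{U}$. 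Fix $C\geq r^{-2}\sup_{\bar\Omega}|w|$ (finite by the $C^{1}$ estimate of Section~3) and then $A$ large so that $A\theta\geq 2(C_1+CC_2)$, giving $L(\Psi\pm w)\leq 0$ on $\Omega_r$. On $\partial\Omega\cap\overline{B_r(z)}$, $u-\underline{u}=0$ and $w=0$, so $\Psi\pm w=C|x-z|^{2}\geq 0$; on $\Omega\cap\partial B_r(z)$, $\Psi\geq Cr^{2}\geq|w|$, so $\Psi\pm w\geq 0$ there as well. The minimum principle for the zeroth-order-free elliptic operator $L$ then yields $\Psi\pm w\geq 0$ throughout $\Omega_r$, i.e.\ $|w|\leq \Psi$ on $\Omega_r$.

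Finally, since $\Psi(z)=0=w(z)$, evaluating along the inward radial direction $x=z-tN$ and letting $t\to 0^{+}$ gives $|Nw(z)|\leq A|N(u-\underline{u})(z)|$, which is bounded by the boundary gradient estimate of Section~3; combined with the trivial control on $NX(\underline{u})(z)$, this is precisely (\ref{mix dir}). The main obstacle is the first step: the non-integrability of $J$ produces non-vanishing torsion-type contributions from the brackets $[X,e_i]$, $[X,\bar e_j]$ and $[X,[e_i,\bar e_j]^{(0,1)}]$ that are absent in the K\"ahler case, and the resulting mixed second-order derivatives of $u$ must be carefully absorbed into $\mathcal{U}$ via the trace identity $u^{i\bar j}u_{i\bar j}=n$ and Cauchy--Schwarz.
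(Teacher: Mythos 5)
Your overall framework (subsolution barrier plus a localization term, maximum principle, Hopf lemma at $z$) matches the paper's, but there is a genuine gap in the central estimate, and you half-noticed it yourself at the end without resolving it.

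The problematic step is the claim $|L(Xu)-X(h)|\leq C(1+\mathcal{U})$. Differentiating the equation along $X$ and commuting $X$ past the complex Hessian produces, after pairing with $u^{i\bar j}$, terms of the schematic form $u^{i\bar i}\,a_{jk}\,e_i X_k u$ with smooth bounded coefficients $a_{jk}$. These are \emph{mixed} second derivatives of $u$ (one tangential real direction, one complex direction) and they are \emph{not} controlled by $\mathcal{U}$ or by the trace identity $u^{i\bar j}u_{i\bar j}=n$, which only sees the $(1,1)$ Hessian. Cauchy--Schwarz cannot absorb them either, because the natural estimate $u^{i\bar i}|e_i X_k u|\le \ve u^{i\bar i}|e_i X_k u|^2+\ve^{-1}u^{i\bar i}$ trades them for the quantity $u^{i\bar i}|e_i X_k u|^2$, which is again uncontrolled unless it appears elsewhere with a good sign. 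The honest bound is $L(Xu)\leq C\,u^{i\bar i}\bigl(1+\sum_k|e_iX_ku|\bigr)$, and this is strictly worse than $C(1+\mathcal{U})$.

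This is exactly why the paper's barrier includes the quadratic term $\sum_{j}|X_j(u-\underline{u})|^2$: applying $L$ to it produces a \emph{positive} contribution $\tfrac12 u^{i\bar i}\sum_j|e_iX_ju|^2$ (up to lower order), which then absorbs the dangerous linear terms via the elementary inequality $\tfrac12 a^2+2ab\geq -2b^2$, yielding $L\bigl(\pm Xu+\sum_j|X_j(u-\underline u)|^2\bigr)\geq -C\mathcal{U}$. Your barrier $\Psi=A(u-\underline u)+C|x-z|^2$ has no such squared-gradient term, so the differential inequality $L(\Psi\pm w)\leq 0$ cannot be achieved no matter how large $A$ is taken, because the offending terms grow with the (a priori unbounded) mixed second derivatives, not with $\mathcal{U}$ alone. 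Adding the term $\sum_j|X_j(u-\underline u)|^2$ to your $\Psi$ (with appropriate sign) and redoing the computation fixes the argument and brings it in line with the paper's. Your final observation that these torsion/commutator contributions ``must be carefully absorbed into $\mathcal{U}$'' is the point of failure: they cannot be; they must be absorbed by an extra positive second-order quantity built into the test function. Note also this is not special to non-integrable $J$; the same mixed-derivative issue already appears in the Hermitian and even the flat CKNS setting once the boundary is curved, which is why CKNS-type mixed-direction barriers include the analogous quadratic term.

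A minor secondary point: the paper replaces your term $A(u-\underline u)$ with the refined barrier $v=\underline u-u-td+Nd^2$ in order to secure $v\leq 0$ in $\Omega_\delta$ together with $L(v)\geq\ve(1+\mathcal U)$ in a single function, rather than splitting by the size of $\sum u_{i\bar i}$; your two-regime treatment is an acceptable alternative, but it is peripheral to the real gap above.
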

\begin{proof}

	Let $\mathcal{O}\subseteq M$ be a local coordinate chart with $z\in \mathcal{O}$. We may pick up real vector fields $X_{1},\cdots, X_{n}$ which are tangential at $z$ to $\partial\Omega$  such that $X_{1}, JX_{1},\cdots, X_{n}, JX_{n}$ is a $g$-orthonormal local frame near $z$. Furthermore, we assume  that
	$Y_{n}:=JX_{n}$ is the normal vector on $\partial\Omega$ near $z$.
	
	Fixing a constant $\delta>0$, we set 
	$$\Omega_{\delta}:=\{x\in \Omega \mid \rho(x)\leq \delta\}.$$ 
	Notice that $\sqrt{-1}\partial\bar{\partial}\rho^{2}=\omega$ at $z$.
	By continuity, we may rearrange $\delta\ll 1$ such that 
	\begin{equation*}
		\frac{1}{2}\omega\leq \sqrt{-1}\partial\bar{\partial}\rho^{2}\leq 2\omega \qquad \text{in} \ \Omega_{\delta}.
	\end{equation*}
	We shall prove (\ref{mix dir}) by applying the maximum principle to
	\begin{equation*}
		\begin{split}
			Q_{\pm}&=\pm X(u-\underline{u})+\sum_{j=1}^{ n}|X_{j}(u-\underline{u})|^{2}+Av-B\rho^{2}	\\
		\end{split}
	\end{equation*}
	for a negative function $v\in C^{\infty}(\Omega_{\delta})$ to be determined later. Let $\mathcal{O}'\subsetneq \mathcal{O}$  be a neighborhood of $z$, and set $S_{\delta}:=\mathcal{O}'\cap \Omega_{\delta}$. 
	
	First we choose $B$ large enough such that $Q_{\pm}\leq 0$ on $\partial S_{\delta}$. We shall prove $Q_{\pm}\leq 0$ in $\bar{S_{\delta}}$ for a large constant $A$. 
	Otherwise,  suppose that $Q_{\pm}$ attains its maximum at a point $x_{0}\in S_{\delta}$. Let $e_{1},\cdots,e_{n}$ with
	$$e_{i}:=\frac{1}{\sqrt{2}}(X_{i}-\sqrt{-1}JX_{i}), \ 1\leq i\leq n$$
	be a local $g$-orthonormal frame in a neighborhood of $x_{0}$ such that the matrix $(u_{i\bar{j}})$ is diagonal at $x_{0}$.
	
	The following lemma plays a significant role in our proof:
	\begin{lemma}
		There exist some uniform positive constants $t$, $\delta$ and $\varepsilon$ sufficiently small, and an $N$ sufficiently large, such that the function
		\begin{equation}
			v:=\underline{u}-u-td+Nd^{2}
		\end{equation}
		satisfies $v\leq 0$ in $\bar{\Omega}_{\delta}$ and
		\begin{equation}\label{Lv}
			L(v)\geq \varepsilon (1+\mathcal{U}) \qquad \text{at} \ x_{0}.
		\end{equation}
	\end{lemma}

	\begin{proof}
		As $\underline{u}\leq u$ and $v\leq 0$ in $\bar{\Omega}_{\delta}$, if we let $\delta\ll t$ be small enough such that $N\delta<t$, then by a direct calculation and the property of the mixed discriminant, at $x_{0}$, 
		\begin{equation*}
			\begin{split}
				L(\underline{u}-u)\geq& n\tau h^{-1}\text{det}(I,\sqrt{-1}\partial\bar{\partial}u[n-1])-n
				=\tau\mathcal{U}-n;
			\end{split}
		\end{equation*}
		\begin{equation*}
			\begin{split}
				L(-td+Nd^{2})=&-(t-Nd)u^{i\bar{j}}d_{i\bar{j}}+Nu^{i\bar{j}}d_{i}d_{\bar{j}}
				\geq-C_{1}(t-Nd) \mathcal{U}+\frac{N}{2}\min_{1\leq i\leq n}u^{i\bar{i}},
			\end{split}
		\end{equation*}
		where we used \eqref{tau}. It follows that
		\begin{equation}\label{combing}
			\begin{split}
				L(v)\geq& (\tau-C_{1}t)\mathcal{U}+\frac{N}{2}\min_{1\leq i\leq n}u^{i\bar{i}}-n
				\geq \frac{\tau}{2}\mathcal{U}+\frac{N}{2}\min_{1\leq i\leq n}u^{i\bar{i}}-n,
			\end{split}
		\end{equation}
		if $t\ll 1$.	By an elementary inequality, we deduce that
		\begin{equation*}
			\begin{split}
				\frac{\tau}{4}\mathcal{U}+\frac{N}{2}\min_{1\leq i\leq n}u^{i\bar{i}}
				\geq & n\Big(\frac{\tau}{4}\Big)^{\frac{n-1}{n}}\Big(N\prod_{1\leq i\leq n} u^{i\bar{i}}\Big)^{\frac{1}{n}}\\
				\geq & n\Big(\frac{\tau}{4}\Big)^{\frac{n-1}{n}}N^{\frac{1}{n}}h^{-\frac{1}{n}}
				\geq C_{2}\Big(\frac{\tau}{4}\Big)^{\frac{n-1}{n}}
				N^{\frac{1}{n}}.
			\end{split}
		\end{equation*}
		We choose $N$ large enough such that $$C_{2}\Big(\frac{\tau}{4}\Big)^{\frac{n-1}{n}}N^{\frac{1}{n}}\geq \frac{\tau}{4}+n.$$
		Substituting this into (\ref{combing}), we get that $L(v)\geq \frac{\tau}{4}(1+\mathcal{U})$. This completes the proof.
	\end{proof}	
	Now we continue to prove  Proposition \ref{Mixed direction estimate.}. Clearly,
	\begin{equation}\label{first we have}
		L(\mp\underline{u}-B\rho^{2})\geq -BC\mathcal{U}.
	\end{equation}
	For each vector field $Y$, 
	\begin{equation*}
		\begin{split}
			L(Yu)&=u^{i\bar{j}}(e_{i}\bar{e}_{j}Yu-[e_{i},\bar{e}_{j}]^{0,1}Yu)\\
			&=Y(h)+u^{i\bar{j}}\Big(e_{i}[\bar{e}_{j},Y]u
			+[e_{i},Y]\bar{e}_{j}u-\big[[e_{i},\bar{e}_{j}]^{0,1},Y\big]u\Big).
		\end{split}
	\end{equation*}
	There exist $\alpha_{jk}, \beta_{jk}\in \mathbb{C}$ such that
	\begin{equation*}
		[e_{j},Y]=\sum_{k=1}^{n}\alpha_{jk}e_{k}+ \beta_{jk}X_{k}; \ [\bar{e}_{j},Y]=\sum_{k=1}^{n}\overline{\alpha_{jk}}\bar{e}_{k}+ \overline{\beta_{jk}}X_{k}.
	\end{equation*}
	It follows that
	\begin{equation*}
		L(Yu)\leq Cu^{i\bar{i}}\Big(1+\sum_{k=1}^{n}|e_{i}X_{k}u|\Big),
	\end{equation*}
	which implies that
	\begin{equation}\label{mix}
		\begin{split}
			&L(\pm X u+\sum_{j=1}^{n}|X_{j}(u-\underline{u})|^{2})\\	
			\geq &	u^{i\bar{i}}\sum_{j=1}^{n}(e_{i}X_{j}(u-\underline{u}))(\bar{e}_{i}X_{j}(u-\underline{u}))-Cu^{i\bar{i}}(1+\sum_{j=1}^{n}|e_{i}X_{j}u|)\\
			\geq & \frac{1}{2}u^{i\bar{i}}\sum_{j=1}^{n}|e_{i}X_{j}u|^{2}-Cu^{i\bar{i}}(1+\sum_{j=1}^{n}|e_{i}X_{j}u|)
			\geq  -C\mathcal{U},
		\end{split}
	\end{equation}
	where in the last inequality we used the fact that $\frac{1}{2}a^{2}+2ab\geq -2b^{2}$. 
	It then follows from (\ref{Lv}), (\ref{first we have}) and (\ref{mix} ) that 
	\begin{equation*}
		L(Q_{\pm})(x_{0})\geq A\varepsilon+(A\varepsilon-BC-C)\mathcal{U}>0,
	\end{equation*}
	if $A$ is large enough such that $A\varepsilon\geq (B+1)C$, which  contradicts to the fact that $Q_{\pm}$ attains its
	maximum at $x_{0}$. Consequently, $Q_{\pm}\leq 0$  in $\bar{S}_{\delta}$ and $Q_{\pm}(z)=0$. By Hopf's lemma, $|NXu|(z)\leq C$.
\end{proof}
\subsection{Pure normal estimates}
\begin{proposition}\label{double}
	Let $N\in T_{z}M$ be orthogonal to $\partial\Omega$ at $z$ such that $N\rho=-1$. We have 
	\begin{equation}\label{double normal}
		|NN(u)|(z)\leq C,
	\end{equation}
	where $C$ depends on $\|u\|_{C^{1}(\bar{\Omega})}$,  $h$, $\|\underline{u}\|_{C^{2}}$ and other known data.
\end{proposition}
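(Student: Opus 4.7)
The plan is to reduce the pure normal estimate to an upper bound on the complex Hessian coefficient $u_{n\bar{n}}(z)$, and then extract this bound from the Monge-Amp\`{e}re equation combined with the tangential and mixed estimates already established, using the subsolution to control the tangential $(n-1)\times(n-1)$ minor from below. I keep the frame set up in the proof of Proposition~\ref{Mixed direction estimate.}: with $X_1, JX_1,\dots, X_{n-1}, JX_{n-1}, X_n$ tangent to $\partial\Omega$ near $z$ and $Y_n := JX_n$ the unit normal at $z$, set $e_i = \tfrac{1}{\sqrt{2}}(X_i-\sqrt{-1}JX_i)$. Since $X_n$ is tangent to $\partial\Omega$ and
\[
u_{n\bar n}=\tfrac12\bigl(X_nX_nu+Y_nY_nu\bigr)+O(|\partial u|),
\]
the pure tangential estimate (\ref{pure `tangent}) controls $X_nX_nu(z)$; together with $u_{n\bar n}\ge 0$ ($u$ is $J$-psh), an upper bound on $u_{n\bar n}(z)$ yields the desired bound on $Y_nY_nu(z)=NN(u)(z)$.

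To bound $u_{n\bar n}(z)$ from above I would use the block decomposition of the complex Hessian. Writing
\[
A:=(u_{\alpha\bar\beta})_{1\leq\alpha,\beta<n},\qquad b:=(u_{\alpha\bar n})_{\alpha<n},\qquad c:=u_{n\bar n},
\]
positivity of $(u_{i\bar j})$ together with $\det(u_{i\bar j})=e^h$ gives the identity
\[
c = b^\ast A^{-1}b + \frac{e^h}{\det A}.
\]
The pure tangential estimate (\ref{pure `tangent}) bounds $\|A\|$, and Proposition~\ref{Mixed direction estimate.} applied with $X=X_\alpha$ and $X=JX_\alpha$ for $\alpha<n$ bounds $|b|$. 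It therefore suffices to produce a uniform positive lower bound $\det A(z)\geq c_0>0$.

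For this last step I would compare the tangential complex Hessian of $u$ with that of $\underline{u}$. Because $e_\alpha,\bar{e}_\beta$ for $\alpha,\beta<n$ are complex combinations of real vector fields tangent to $\partial\Omega$, and $u-\underline{u}\equiv 0$ on $\partial\Omega$, a direct computation yields that $(u-\underline{u})_{\alpha\bar{\beta}}(z)$ equals $N(u-\underline{u})(z)$ multiplied by a boundary-geometry (second fundamental form type) coefficient, hence is controlled by $\|u\|_{C^1(\bar\Omega)}+\|\underline u\|_{C^1(\bar\Omega)}$. The strict $J$-plurisubharmonicity $\underline{u}_{i\bar j}\geq \tau\delta_{ij}$ from (\ref{tau}) then shows that $A(z)$ is the matrix $\underline{A}(z)\geq \tau I$ plus a bounded perturbation. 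To upgrade this to a genuine positive lower bound on $\det A(z)$, I would mimic the barrier/Hopf-lemma scheme of Proposition~\ref{Mixed direction estimate.}, applying the maximum principle to a test function of the form $\log\det A + Av - B\rho^2$ (or equivalently a tangential entry of the complex Hessian) inside $S_\delta$, where $v$ is the barrier constructed in the previous lemma satisfying $L(v)\geq\varepsilon(1+\mathcal{U})$; Hopf's lemma at $z$ then converts the interior estimate into the required lower bound on the tangential minor.

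The main obstacle is precisely this last step: turning the a priori only-bounded perturbation $A-\underline{A}$ into a uniform positive lower bound $\det A(z)\ge c_0$. The key inputs needed to close the argument are the sign of $N(u-\underline{u})(z)$ forced by $u\geq\underline{u}$ with equality on $\partial\Omega$, together with the strict positivity of the subsolution's complex Hessian, combined through the barrier $v$ of the previous lemma. Once $\det A \geq c_0 > 0$ is established, the desired bound (\ref{double normal}) follows immediately from the identity $c=b^\ast A^{-1}b+e^h/\det A$ and the previously established tangential and mixed estimates.
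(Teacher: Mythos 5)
Your reduction is the classical Caffarelli--Kohn--Nirenberg--Spruck/Guan scheme for the complex Monge--Amp\`ere equation: the identity $u_{n\bar n}=\tfrac12(X_nX_nu+Y_nY_nu)+O(|\partial u|)$ transfers the pure normal bound to an upper bound on $u_{n\bar n}$, the Schur complement identity $u_{n\bar n}=b^{\ast}A^{-1}b+e^{h}/\det A$ is correct, and with (\ref{pure `tangent}) and (\ref{mix dir}) everything reduces to a uniform positive lower bound on the tangential minor $\det A(z)=\det(u_{\alpha\bar\beta})_{\alpha,\beta<n}$. This is exactly the same quantity the paper controls, phrased there via Trudinger's $\tilde m$: the paper's asserted inequality $\log\det(\lambda'(\tilde U),R)\geq h+c_{0}$ for all large $R$ is equivalent to $\det A\geq e^{h+c_0}/R_{0}$ on $\partial\Omega$.

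The gap is that you do not prove this lower bound, and the two ideas you offer in its place do not close it. First, the sign of $N(u-\underline u)(z)$ combined with $(u-\underline u)_{\alpha\bar\beta}=-g(Y_n,\nabla_\alpha\bar e_\beta)\,Y_n(u-\underline u)$ gives $A=\underline A+(\text{sign})\cdot(\text{boundary curvature})$; since $\underline A\geq\tau I$ this would finish the job only if the curvature coefficients $g(Y_n,\nabla_\alpha\bar e_\beta)$ were sign-definite (e.g.\ if $\Omega$ were strictly pseudoconvex), which the theorem does not assume. Second, the proposed maximum principle applied to $\log\det A+Av-B\rho^{2}$ is not well-posed: the tangential block $A$ is only defined on $\partial\Omega$ (the frame $X_\alpha$ is tangent to the boundary, not a frame on all of $\Omega_\delta$), so $\log\det A$ is not an interior function on which $L$ acts and Hopf's lemma has nothing to bite.

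This is precisely the difficulty the paper's Trudinger-style argument is built to overcome. It introduces the limiting concave operator $\tilde F$, its linearization coefficients $\tilde F^{\alpha\bar\beta}(\tilde U(x_0))$ frozen at the boundary extremum $x_{0}$, and the genuine scalar function $\Phi(x)$ on $\Omega_\delta$ defined from them; the concavity inequality (\ref{concave}) gives $\Phi\geq 0$ on $\partial\Omega$ near $x_{0}$ with $\Phi(x_0)=0$, and then the barrier $\Psi=-\sum_j|X_j(u-\underline u)|^2-Av+B\rho^2$ (with the same $v$ from the mixed-derivative lemma) and Hopf's lemma at $x_{0}$ produce the lower bound. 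Freezing the coefficients at $x_{0}$ is exactly the device that converts ``$\log\det A$'' into something the interior maximum principle can handle, and this is the ingredient your proposal is missing. The rest of your reduction is sound, but as written the proof does not establish $\det A\geq c_{0}$.
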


Before proving this, let us recall some useful facts from the matrix theory. For any Hermitian matrix $A=(a_{i\bar{j}})$ with eigenvalues $\lambda_{i}(A)$, let $\tilde{A}:=(a_{\alpha\bar{\beta}})$, and  we denote  the eigenvalues of $\tilde{A}$  by $\lambda'_{\alpha}(\tilde{A})$.\footnote{In what follows, we let
	$\alpha,\beta=1,2,\cdots, n-1;$
	$i,j=1,2,\cdots,n.$	} It follows from Cauchy's interlace inequality \cite{Hwa04} and \cite[p. 272]{CNS85} that when $|a_{n\bar{n}}|\rightarrow \infty$,
\begin{equation}\label{matrix theory}
	\begin{split}
		& 	\lambda_{\alpha}(A)\leq  \lambda'_{\alpha}(\tilde{A})\leq \lambda_{\alpha+1}(A);\\
		&	\lambda_{\alpha}(A)=\lambda'_{\alpha}(\tilde{A})+O(1);\\
		&a_{n\bar{n}}\leq 	\lambda_{n}(A)\leq  a_{n\bar{n}}\Big(1+O\Big(\frac{1}{a_{n\bar{n}}}\Big)\Big).
	\end{split}
\end{equation}
\begin{proof}
	Let $U:=(u_{i\bar{j}})$ (resp. $\underline{U}:=(\underline{u}_{i\bar{j}})$) be the Hessian matrix of $u$ (resp. $\underline{u}$). We assert that there are uniform constants $c_{0}, R_{0}>0$ such that, for all $R\geq R_{0}$, 
	$(\lambda'(\tilde{U}),R)\in \Gamma_{n}$ and 
	\begin{equation*}\label{suffices}
		\log\det(\lambda'(\tilde{U}),R)\geq h+c_{0}, \qquad \text{on} \ \partial\Omega.
	\end{equation*}

	To this end, we  follow an idea of Trudinger \cite{Tru95} and set
	\begin{equation*}\label{trudinger}
		\tilde{m}:=\underset{R\rightarrow \infty}{\lim\inf}\underset{\partial\Omega}{\min}\big(	\log\det\big(\lambda'(\tilde{U}),R\big)-h\big).
	\end{equation*}
	Then we are reduced to showing
	\begin{equation}\label{reduced goal}
		\tilde{m}\geq c_{0}>0.
	\end{equation} 
	
	We may assume that $\tilde{m}<\infty$, otherwise we are done. Supposing that $\tilde{m}$ is attained at a point $x_{0}\in \partial\Omega$,  we pick up a local $g$-orthonormal frame $(e_{1}, \cdots, e_{n})$ as in the previous subsection such that the matrix $(\tilde{U}_{\alpha\bar{\beta}}(x_{0}))$ is diagonal. We choose real vector fields $X_{1},\cdots, X_{n}$ tangential at $x_{0}$ to $\partial\Omega$  such that $X_{1}, JX_{1},\cdots, X_{n}, JX_{n}$ constitute a $g$-orthonormal local frame near $x_{0}$, and
	$Y_{n}:=JX_{n}$ is the normal vector on $\partial\Omega$ near $x_{0}$.  
	Letting	$$\Gamma_{\infty}:=\big\{(\lambda_{1},\cdots,\lambda_{n-1}) \mid \lambda_{\alpha}>0, \ 1\leq \alpha\leq n-1\big\}$$
	be a positive orthant in $\mathbb{R}^{n-1}$, we divide the proof into two cases.
	
	~\\
	\textbf{Case 1.} Assume that it holds that
	\begin{equation}
		\lim_{\lambda_{n}\rightarrow \infty}\sigma_{n}(\lambda', \lambda_{n})=\infty, \qquad \text{for any} \ \lambda'\in \Gamma_{\infty}.
	\end{equation}
	
	By virtue of (\ref{pure `tangent}) and (\ref{mix dir}), we know that
	\begin{equation*}
		\lambda'(\tilde{U})(x_{0})\in \mathcal{C},
	\end{equation*} 
	where $\mathcal{C}\subset  \Gamma_{\infty}$ is compact. Then there  exist $c_{1},R_{1}\in \mathbb{R}_{>0}$ depending on $\lambda'(\tilde{U}(x_{0}))$ such that 
	\begin{equation*}
		\log\det\big(\lambda'(\tilde{U}(x_{0})),R\big)\geq h(x_0)+c_{1}, \qquad \textrm{for any} \ R\geq R_{1}.
	\end{equation*}
	By continuity, there exists a cone $\hat{\mathcal{C}}\subset \Gamma_{\infty}$ and a neighborhood of $\mathcal{C}$ such that
	\begin{equation}\label{case 1}
		\log\det\big(\lambda',R\big)\geq h(x_0)+\frac{c_{1}}{2}, \qquad \textrm{for any} \ \lambda'\in  \hat{\mathcal{C}} \ \textrm{and} \ R\geq R_{1}.
	\end{equation}
	Now we apply (\ref{matrix theory}) to $U=(u_{i\bar{j}})$,  and there exists a large constant $R_{2}\geq R_{1}$ satisfying if $u_{n\bar{n}}(x_{0})\geq R_{2}$, then
	\begin{equation}\label{max eigen}
		\lambda_{n}(U)(x_{0})\geq u_{n\bar{n}}(x_{0})\geq R_{2}\geq R_{1}.
	\end{equation}
We can shrink $\hat{\mathcal{C}}$ if necessary such that
	\begin{equation}\label{other eigen}
		\big(\lambda_{1}(U)(x_{0}),\cdots, \lambda_{n-1}(U)(x_{0})\big)\in \hat{\mathcal{C}}.
	\end{equation}
	It follows from (\ref{case 1}), (\ref{max eigen}) and (\ref{other eigen}) that 
	\begin{equation*}
		\log\det(u_{i\bar{j}})(x_0)\geq h(x_{0})+\frac{c_{1}}{2},
	\end{equation*}
	which yields a contradiction to (\ref{2.1}). Hence (\ref{reduced goal}) follows that by letting $c_{0}:=\frac{c_{1}}{2}$.
	
	~\\
	\textbf{Case 2.} Assume that it holds that
	\begin{equation}
		\lim_{\lambda_{n}\rightarrow \infty}\sigma_{n}(\lambda', \lambda_{n})<\infty, \qquad \text{for any} \ \lambda'\in \Gamma_{\infty}.
	\end{equation}
	
	We define 
	$$\tilde{F}(E):=\lim_{R\rightarrow\infty}\log\det(\lambda'(E),R)$$
	on the set of $(n-1)^{2}$ Hermitian matrices with $\lambda'(E)\in \Gamma_{\infty}$. Notice that $\tilde{F}$ is concave and finite, since the operator $\lambda\mapsto\log\det(\lambda)$ is concave and continuous. Hence,  there exists a symmetric matrix $(\tilde{F}^{\alpha\bar{\beta}})$ such that
	\begin{equation}\label{concave}
		\tilde{F}^{\alpha\bar{\beta}}(\tilde{U})\Big(E_{\alpha\bar{\beta}}-\tilde{U}_{\alpha\bar{\beta}}\Big)\geq \tilde{F}(E)-\tilde{F}(\tilde{U})
	\end{equation}
	for any $(n-1)^{2}$ Hermitian matrix $E$. On $\partial\Omega$, since $u=\underline{u}$,
	\begin{equation*}
		\begin{split}
			\tilde{U}_{\alpha\bar{\beta}}-\tilde{\underline{U}}_{\alpha\bar{\beta}}=&	\nabla_{\bar{\beta}}\nabla_{\alpha}(u-\underline{u})	
			=-g(Y_{n},\nabla_{\alpha}\bar{e}_{\beta})Y_{n}(u-\underline{u}),
		\end{split}
	\end{equation*}
	where $\nabla_{\alpha}\bar{e}_{\beta}=[e_{\alpha},\bar{e}_{\beta}]^{(0,1)}$ (cf. \cite{LZ20}). This, together with (\ref{concave}), yield that
	\begin{equation*}
		\begin{split}
			&Y_{n}(u-\underline{u})(x_{0})	\tilde{F}^{\alpha\bar{\beta}}(\tilde{U}(x_{0}))g(Y_{n},\nabla_{\alpha}\bar{e}_{\beta})\\
			\geq& \tilde{F}(\tilde{\underline{U}}(x_{0}))-\tilde{F}(\tilde{U}(x_{0}))
			= \tilde{F}(\tilde{\underline{U}}(x_{0}))-\tilde{m}-h(x_{0})\\
			\geq &\tilde{F}(\tilde{\underline{U}}(x_{0}))-\log\det(\lambda(\underline{U}))(x_{0})-\tilde{m}
			\geq \tilde{c}-\tilde{m},
		\end{split}
	\end{equation*}
	where 
	$$\tilde{c}:=
	\underset{R\rightarrow \infty}{\lim\inf}\underset{\partial\Omega}{\min}\big[	\log\det(\lambda'(\tilde{\underline{U}}),R)-\log\det(\lambda(\underline{U}))\big].$$ Notice that $0<\tilde{c}<\infty$, since the operator $\lambda\mapsto\log\det(\lambda)$ is strictly increasing with respect to each variable.
	Now we divide the proof into two cases.
	
	\textbf{Subcase 2 (i)} Assume that at $x_{0}$,
	\begin{equation}
		Y_{n}(u-\underline{u})	\tilde{F}^{\alpha\bar{\beta}}(\tilde{U})g(Y_{n},\nabla_{\alpha}\bar{e}_{\beta})\leq \frac{\tilde{c}}{2}.
	\end{equation}
	Given this, $\tilde{m}\geq \frac{\tilde{c}}{2}$, and by choosing $c_{0}=\frac{\tilde{c}}{2}$, we  are done.
	
	\textbf{Subcase 2 (ii)} Assume that at $x_{0}$,
	\begin{equation}\label{case 2}
		Y_{n}(u-\underline{u})	\tilde{F}^{\alpha\bar{\beta}}(\tilde{U})g(Y_{n},\nabla_{\alpha}\bar{e}_{\beta})\geq \frac{\tilde{c}}{2}.
	\end{equation}
	Define $$\eta:=\tilde{F}^{\alpha\bar{\beta}}(\tilde{U}(x_{0}))g(Y_{n},\nabla_{\alpha}\bar{e}_{\beta}) \qquad  \text{on} \ \partial\Omega.$$ 
	Notice that $Y_{n}(u-\underline{u})(x_{0})\geq 0$, and by (\ref{case 2}), is strictly positive. Thus
	\begin{equation*}
		\eta	\geq \frac{\tilde{c}}{2Y_{n}(u-\underline{u})}\geq 2\tau\tilde{c} \qquad \text{at} \ x_{0}
	\end{equation*}
	for some uniform constant $\tau>0$. We may assume that $\eta\geq \tau\tilde{c}$ in $\Omega_{\delta}$ by shrinking $\delta$ again if necessary.
	
	Let us define a function in $\Omega_{\delta}$ by
	\begin{equation*}
		\begin{split}
			\Phi(x)=&\frac{1}{\eta(x)}\tilde{F}^{\alpha\bar{\beta}}(\tilde{U}(x_{0}))	\Big(\tilde{\underline{U}}_{\alpha\bar{\beta}}(x)-\tilde{U}_{\alpha\bar{\beta}}(x_{0})\Big)
			-\frac{h(x)-h(x_{0})}{\eta(x)}-Y_{n}(u-\underline{u})(x)\\
			:=& Q(x)-Y_{n}(u-\underline{u})(x).
		\end{split}
	\end{equation*}
	By a direct calculation,
	\begin{equation*}
		-\eta(x)Y_{n}(u-\underline{u})(x)=\tilde{F}^{\alpha\bar{\beta}}(\tilde{U}(x_{0}))	\Big(\tilde{U}_{\alpha\bar{\beta}}(x)-\tilde{\underline{U}}_{\alpha\bar{\beta}}(x)\Big).	
	\end{equation*}
	It follows from (\ref{concave}) that
	\begin{equation*}
		\begin{split}
			\eta(x)\Phi(x)&=\tilde{F}^{\alpha\bar{\beta}}(\tilde{U}(x_{0}))	\Big(\tilde{U}_{\alpha\bar{\beta}}(x)-\tilde{U}_{\alpha\bar{\beta}}(x_{0})\Big)-h(x)+h(x_{0})\\
			&\geq  \tilde{F}(\tilde{U}(x))- \tilde{F}(\tilde{U}(x_{0}))	-h(x)+h(x_{0}).
		\end{split}
	\end{equation*}
	Thus, $\Phi(x_{0})=0$ and $\Phi\geq 0$ near $x_{0}$ on $\partial\Omega$. Define
	\begin{equation*}
		\Psi:=-\sum_{j=1}^{ n}|X_{j}(u-\underline{u})|^{2}-Av+B\rho^{2} \qquad  \text{in}\ \Omega_{\delta}.
	\end{equation*}
	One can verify that  $\Phi+\Psi\geq 0 \  \text{on} \ \partial \Omega_{\delta}$ and
	$$L(\Phi+\Psi)\leq 0\qquad \text{in}\ \Omega_{\delta}$$  provided that $A\gg B\gg 1$. By Hopf's lemma, we know $Y_{n}\Phi(x_{0})\geq -C$, then $Y_{n}Y_{n}u(x_{0})\leq C$.
	
	Now we are in a position where all the eigenvalues of $U(x_{0})$ are bounded, so $\lambda(U)(x_{0})$  is contained in a compact subset of $\Gamma_{n}$. Since the operator $\lambda\mapsto\log\det(\lambda)$ is strictly increasing  with respect to each variable,
	\begin{equation*}
		\tilde{m}\geq m_{R}:=\log\det(\lambda'(\tilde{U}
		(x_{0})),R)-h(x_{0})>0
	\end{equation*}
	when $R$ is large enough. This proves (\ref{reduced goal}), and the proof is  complete.
\end{proof}

\section{Existence of subsolutions}
Suppose that $\Omega\subseteq M$ is a smooth pseudoconvex domain, and let $\rho$ be a strictly $J$-psh defining function for $\Omega$. Then there exists a uniform positive constant $\gamma>0$ such that $\ddbar \rho\geq \gamma \omega$. For each $s>0$, we set 
\begin{equation*}
	\underline{u}:=\hat{\varphi}+s(e^{\rho}-1),
\end{equation*}
where $\hat{\varphi}$ is an arbitrary $J$-psh extension of $\varphi|_{\partial\Omega}$. Then 
\begin{equation*}
	\begin{split}
		\ddbar \underline{u}=&\ddbar{\hat{\varphi}}+se^{\rho}(\ddbar{\rho}+\sqrt{-1}\partial\rho\wedge\bar{\partial}\rho)	\\
		\geq & 	s\gamma e^{\rho}\omega+se^{\rho}\sqrt{-1}\partial\rho\wedge\bar{\partial}\rho.
	\end{split}
\end{equation*}
Therefore,
\begin{equation*}
	\det(\underline{u}_{i\bar{j}})	\geq (s\gamma)^{n}e^{n\rho}(1+\frac{1}{\gamma}|\partial\rho|^{2}).
\end{equation*}
We may choose $s\gg 1$ such that $\det(\underline{u}_{i\bar{j}})\geq N:= \sup_{\bar{\Omega}}h$. Notice that $\underline{u}=\varphi$ on $\partial\Omega$, so $\underline{u}$ is a desired subsolution of Eq. (\ref{dp}).

~\\
{\bf Acknowledgments.} The author would like to thank his thesis advisor professor Xi Zhang for his constant support and  advice. The reaserch is  supported by the National Key R and D Program of China 2020YFA0713100.

\end{CJK}
\end{document}